\documentclass[a4paper, 12pt]{article}
\usepackage{amsthm}
\usepackage{amsmath}
\usepackage{amssymb}
\usepackage{enumerate}
\usepackage{latexsym}
\usepackage{verbatim}
\usepackage[mathscr]{eucal}
\usepackage{bm}
\usepackage{mathrsfs}
\usepackage{indentfirst}
\usepackage{typearea}
\typearea{16}
\allowdisplaybreaks[1]

\newcommand{\relmid}[1]{\mathrel{}\middle#1\mathrel{}}

\begin{document}
\title{On the calculation of the ramified Siegel series}  
\date{}
\author{Masahiro Watanabe\thanks{Kyoto University, Kitashirakawa Oiwake-Cho, Sakyo-ku, Kyoto-shi, Kyoto-fu, 606-8224, Japan, m.watanabe@math.kyoto-u.ac.jp\quad ORCID: 0000-0003-1707-333X}}
\maketitle

\theoremstyle{plain}
\newtheorem{theorem}{Theorem}[section]
\newtheorem{lemma}{Lemma}[section]
\newtheorem{proposition}{Proposition}[section]
\theoremstyle{definition}
\newtheorem{definition}{Definition}[section]
\newtheorem{conjecture}{Conjecture}[section]
\theoremstyle{remark}
\newtheorem{remark}{{\bf Remark}}[section]
\newtheorem{corollary}{\bf {Corollary}}[section]
\newtheorem{formula}{\bf {Formula}}[section]

\begin{abstract}
	The ramified Siegel series is an important factor that appears in the Fourier coefficient of the Siegel Eisenstein series.
	Many formulas for the ramified Siegel series under various conditions are already known.
	However, an explicit formula for the general case has not yet been obtained.
	We derive a formula for the Siegel series with arbitrary dimension $n$, assuming that the additive character $\psi$ is primitive.
	Our results cover nonarchimedean, non-dyadic local fields $F$, including the case $F=\mathbb{Q}_p$.
	We also give explicit values of the ramified Siegel series for degrees $n=1, 2,$ and $3$.
	\\
	\\
	\textbf{Keywords}: 
	Siegel series;
	Siegel Eisenstein series;
	local densities;
	automorphic forms
\end{abstract}

\section{Introduction}

Let $k\ge2$ be an integer.
We recall the Fourier expansion of the Eisenstein series.
For $\tau\in\mathbb{H}:=\{z\in\mathbb{C} \mid \mathrm{Im}(z)>0\}$,
let $G_{2k}(\tau)$ be the Eisenstein series of weight $2k$, defined by
$$G_{2k}(\tau):=\sum_{(m, n)\in\mathbb{Z}^2\backslash\{(0,0)\}}\frac{1}{(m+n\tau)^{2k}}.$$
The Fourier expansion of $G_{2k}(\tau)$ is well known as follows.
$$G_{2k}(\tau)=2\zeta(2k)+\frac{2(2\pi i)^{2k}}{(2k-1)!}\sum_{n=1}^\infty \sigma_{2k-1}(n)q^n.$$
Here $\zeta(z)$ is Riemann's zeta function, and $\sigma_p(n)$ is the divisor sum function.
We write $q=\exp(2\pi i\tau)$.
Hence, the number-theoretic function $\sigma_{2k-1}(n)$ appears in the Fourier coefficients.

We now consider the Siegel Eisenstein series. Let $k\ge2$ and $l, n\ge1$ be integers, and $\psi$ be a Dirichlet character modulo $l$.
The Siegel Eisenstein series $E_{k,l, \psi}^n(Z)$ is defined as
$$E_{k,l, \psi}^n(Z):= \sum_{\begin{pmatrix}
		A & B \\ C & D
	\end{pmatrix} \in \Gamma_\infty^n\backslash\Gamma_0^n(l)}\psi(\det D)\det (CZ+D)^{-k},$$
where the sets $\Gamma_\infty^n$ and $\Gamma_0^n(l)$ are subgroups of $\Gamma^n=\mathrm{Sp}_n(\mathbb{Z})$
defined by
\begin{align*}
	\Gamma_\infty^n &:= \left\{\begin{pmatrix}
		A & B \\ C & D
	\end{pmatrix} \in \Gamma^n \relmid| C=0
	\right\}, \\
	\Gamma_0^n(l) &:=\left\{\begin{pmatrix}
		A & B \\ C & D
	\end{pmatrix} \in \Gamma^n \relmid| C\equiv0 \pmod{l}
	\right\},
\end{align*}
respectively.
Here $Z\in\mathbb{H}^n$ denotes $\{Z\in M_n(\mathbb{C}) \mid {}^tZ=Z, \,\, \mathrm{Im}(Z)\,\, \text{is positive definite}\}$.

The Fourier expansion of the Siegel Eisenstein series is given by
$$E_{k,l, \psi}^n(Z)=\sum_{A\in S_n^*, A>0}c(A)\exp(2\pi i\, \mathrm{Tr}(AZ)),$$
where
$$S_n^*
:=\left\{A=(a_{ij})\in\mathrm{Sym}_n(\mathbb{Q}) \mid a_{ii}\in\mathbb{Z}, \,\, a_{ij}\in\tfrac{1}{2}\mathbb{Z}\,\, (i\neq j)\right\},$$
and $c(A)$ is a constant depending only on the matrix $A$. The symbol $A>0$ means that the matrix $A$ is positive definite.

The ramified Siegel series is an important factor that appears in $c(A)$ and is partially calculated in many papers.
In this article, we calculate the ramified Siegel series under some conditions.

More precisely, the Fourier coefficient $c(A)$ is calculated as
\begin{align*}
	c(A) = \frac{2^{-\frac{n(n-1)}{{2}}}(-2\pi i)^{nk}}{\pi^{\frac{n(n-1)}{4}}\prod_{j=0}^{m-1}\Gamma(s-j/2)}
	(\det A)^{k-\frac{n+1}{2}}\prod_{p:\,\, \text{prime}}b_n^p(A, s),
\end{align*}
when $A\ge0$ and the factor $b_n^p(A, s)$ is called the ramified Siegel series when $p$ divides $l$.
Here we denote $\Gamma(s)$ the Gamma function.

Many results of the formula of the Siegel series are already known.
Katsurada \cite{Katsurada} gave the explicit formula for the case $l=1$ (full level). 
When $n=2$, the results are well known when the Dirichlet character $\psi$ is primitive, that is, $\psi=\prod\psi_p$.
For example, Mizuno \cite{Mizuno} treats the square-free level case, while Takemori \cite{Takemori} treats arbitrary level $l$.
Gunji calculated when $n=2$, in \cite{Gunj}, and when $n=3$, in \cite{Gunji}, respectively.
In this article, we calculate the ramified Siegel series for arbitrary $n$.

In \cite{Gunji2}, Gunji proved a result equivalent to Theorem 1.1.
We extend this result when $F$ is a non-dyadic local field, while all results above are $F=\mathbb{Q}_p$ case.
By calculating the integration of the Siegel series, we divide the domain of integration by the orbits of the action of $\Gamma_0$ and use the method of Sato and Hironaka \cite{SatoHironaka}.
We define a Weil constant $\alpha_\psi(x)$ ($x\in F^\times$) so that the integrals $I(a), I^*(a)$ in \cite{SatoHironaka}, which are well-known as Gauss sums when $F=\mathbb{Q}_p$, can be evaluated uniformly over $F$ using this constant.

The ramified Siegel series is also related to the degenerate Whittaker function \cite{Moeglin}.
When we use the functional equation of the Whittaker function calculated in \cite{Sweet} and \cite{FEIkeda}, we obtain the functional equation of the Siegel series.
The calculation of the functional equation is in progress, and a detailed account will be given elsewhere.

We state our main results.

Let $G=\mathrm{Sp}_n(F)$ be the symplectic group of rank $n$ over a nonarchimedean, non-dyadic field $F$.
Define $\mathfrak{o}$, $\mathfrak{p}$ as the ring of integers of $F$ and the maximal ideal of $\mathfrak{o}$, respectively.
We fix a prime element $\pi\in\mathfrak{p}$.
We write $K=\mathrm{Sp}_n(\mathfrak{o})$.
Let $P$ be the standard Siegel parabolic subgroup of $G$, and the subgroup $\Gamma$ of $G$ is defined by
$$\Gamma=\left\{
\begin{pmatrix}
	A & B \\ C & D
\end{pmatrix}
\in K\relmid| C\equiv0\bmod\mathfrak{p}\right\}.$$
We define $\{w_i\}_{0\le i\le n}$ by
$$w_i=\left(\begin{array}{rr|rr}
	1_{n-i} &&& \\
	&&& -1_i \\ \hline
	&& 1_{n-i} & \\
	& 1_i &&
\end{array}\right).$$
The set $\{w_i\}_{0\le i\le n}$ is a complete set of representatives of the double coset $P\backslash G/\Gamma$.

Let $\psi$ be an additive character of $F$ of order $0$ and $\omega$ be a character of $F^\times$ satisfying $\omega^2=1$.
We define $I_n(\omega, s)=\mathrm{Ind}_P^G(\omega\circ|\det|^s)$  the space of smooth functions on $G$ satisfying
$$f\left(
\begin{pmatrix}
	A & * \\ 0 & {}^t\!A^{-1}
\end{pmatrix}
g\right)=\omega(\det A)|\det A|^{s+\frac{n+1}{2}}f(g).$$
We also define the space
$$I_n\left(\omega, s-\frac{n+1}{2}\right)^{\Gamma, \omega}=
\left\{f\in I_n\left(\omega, s-\frac{n+1}{2}\right)\relmid| f(gk)=\omega(k)f(g)\,\, \text{for any}\,\, k\in\Gamma\right\}.$$
This space is spanned by the functions $\{f_i\}_{0\le i\le n}$ which satisfy $f_i(w_j)=\delta_{ij}$.

A ramified Siegel series associated with
$\displaystyle\varphi\in I_n\left(\omega, s-\frac{n+1}{2}\right)^{\Gamma, \omega}$ is defined by an integral
$$\int_{\mathrm{Sym}_n(F)}\varphi\left(w_n
\begin{pmatrix}
	1 & X \\ 0 & 1
\end{pmatrix}
\right)\psi(-\mathrm{tr}(BX))dX.$$
We write $S_t(B, s)^\omega$ for the ramified Siegel series associated with $\varphi=f_t$ ($0\le t\le n$). 

We divide the domain of integration $\mathrm{Sym}_n(F)$ by the orbits of the action of
$\Gamma_0=\{\gamma=(\gamma_{ij})\in\mathrm{GL}_n(\mathfrak{o})\mid\gamma_{ij}\in\mathfrak{p}\,(i>j)\}$
and use a formula for the volume of each orbit due to Sato and Hironaka \cite{SatoHironaka}.

We need some notations.
We define $\chi$, a ramified nontrivial character of $F^\times$, by $\chi(x)=\langle \pi, x\rangle$.
Here we note $\langle\,, \,\rangle$ a Hilbert symbol.
Put $I=\{1, 2, \cdots, n\}$ and consider the standard action of $\mathfrak{S}_n$ on $I$.
Let $\sigma \in \mathfrak{S}_n$ satisfy $\sigma^2 = 1$, and let
$I = I_0 \cup \cdots \cup I_r$ be a partition of $I$ into disjoint $\sigma$-stable subsets.
We define the numbers $c_1^{(k)}(\sigma), c_2(\sigma), n_k, n^{(k)}, n(k)$, associated with the partition $\{I_i\}$, by
\begin{align*}
	c_1^{(k)}(\sigma)&=\#\{i\in I_k \mid \sigma(i)=i\}, \\
	c_2(\sigma)&=\frac{1}{2}\#\{i\in I\mid \sigma(i)\neq i\}, 
\end{align*}
and
\begin{align*}
	n_k&=\#I_k,\quad n^{(k)}=\sum_{l=k}^rn_l,\quad n(k)=\frac{n^{(k)}(n^{(k)}+1)}{2}.
\end{align*}
We also put
\begin{align*}
	\tau(\{I_i\})&=\sum_{l=1}^r\#\{(i, j)\in I_l\times (I_0\cup\cdots\cup I_{l-1})\mid j<i\}, \\
	t(\sigma, \{I_i\})&=\sum_{l=0}^r\#\{(i, j)\in I_l\times I_l \mid i<j<\sigma(i),\, \sigma(j)<\sigma(i)\}, \\
	e_{\sigma, i, u}&=\begin{cases}
		0 & (u\le i,\, u\le\sigma(i)) \\
		1 & (\sigma(i)<u\le i\,\,\, \text{or}\,\,\, i<u\le\sigma(i)) \\
		2 & (i<u,\, \sigma(i)<u).
	\end{cases}
\end{align*}
Let $B$ a matrix $B=\mathrm{diag}(v_1\pi^{e_1}, \cdots, v_n\pi^{e_n})$, where $v_i\in\mathfrak{o}^\times$ and $0\le e_1\le \cdots\le e_n$. 
Set
\begin{align*}
	b_l(\sigma, B)&=\min\left\{
	\{e_i \mid i\in I_l, \sigma(i)>i\} \cup \{e_i+1 \mid i\in I_l, \sigma(i)\le i\}\right\}, \\
	B_i(\lambda)
	&=\{ k \mid 1\le k\le i-1,\,\, e_k+\lambda<0,\,\, e_k\not\equiv\lambda\bmod2\} \\
	&\quad\cup\, \{ k \mid i+1\le k\le n,\,\, e_k+\lambda+2<0,\,\, e_k\not\equiv\lambda\bmod2\}, \\
	\tilde{\rho}_{l, \lambda}(\sigma; B)
	&=\frac{1}{2}\sum_{i\in I_l}\sum_{u=1}^n
	\min\{e_u+e_{\sigma, i ,u}+\lambda,\, 0\}.
\end{align*}
We further define
\begin{align*}
	\xi_{i, \lambda}(B)_\chi
	&=\prod_{k \in B_i(\lambda)}\chi(v_k)
	\times
	\begin{cases}
		0 & e_i+\lambda\ge0,\,\, \#B_i(\lambda) : \text{even} \\
		(1-q^{-1})\chi(-1)^{[\#B_i(\lambda)/2]+1} & e_i+\lambda\ge0,\,\, \#B_i(\lambda) : \text{odd} \\
		\chi(v_i)\chi(-1)^{[\#B_i(\lambda)/2]+1} & e_i+\lambda=-1,\,\, \#B_i(\lambda) : \text{even} \\
		-q^{-1/2}\chi(-1)^{[\#B_i(\lambda)/2]+1} & e_i+\lambda=-1,\,\, \#B_i(\lambda) : \text{odd}. 
	\end{cases}
\end{align*}
For $\varphi\in\{f_0, f_1 \cdots, f_n\}$, we give an explicit formula of the ramified Siegel series as follows.

\begin{theorem}
	The ramified Siegel series $S_t(B, s)^\chi$ $(0\le t\le n)$ is given by
	\begin{align*}
		S_t(B, s)^\chi
		&=\alpha_\psi(\pi)^{n-t}\sum_{\substack{\sigma\in\mathfrak{S}_n \\ \sigma^2=1}}
		(1-q^{-1})^{c_2(\sigma)}q^{-c_2(\sigma)}
		\sum_{\substack{I=I_0\cup\cdots\cup I_r \\ n^{(k)}=t}}q^{-\tau(\{I_i\})-t(\sigma, \{I_i\})}\frac{
			(1-q^{-1})^{\sum_{l=k}^rc_1^{(l)}(\sigma)}q^{n(k)}}
		{\prod_{l=k}^r(q^{n(l)}-1)} \\
		&\quad\times\sum_{\{\nu\}_k^t}\prod_{l=0}^{k-1}\chi(-1)^{\nu_l(n^{(l)}-n^{(k)})}
		q^{\nu_l((sn^{(l)}-n(l))-(sn^{(k)}-n(k)))+\tilde{\rho}_{l, \nu_0+\cdots+\nu_l}(\sigma; B)}
		\prod_{\substack{i\in I_l \\ \sigma(i)=i}}\xi_{i, \nu_0+\cdots+\nu_l}(B)_\chi.
	\end{align*}
	Here the summation with respect to $\{\nu\}_k^t$ for $k\ge1$ is taken over the finite set
	$$\left\{(\nu_0, \nu_1, \cdots, \nu_{k-1})\in \mathbb{Z}\times\mathbb{Z}_{>0}^{k-1} \relmid|
	-b_l(\sigma, B)\le\nu_0+\nu_1+\cdots+\nu_l\le-1\,\, (0\le l\le k-1)\right\}.$$
	Moreover, in $\displaystyle \sum_{\substack{I=I_0\cup\cdots\cup I_r\\ n^{(k)}=t}}$
	we sum over all $\sigma$-stable partitions $I=I_0\cup\cdots\cup I_r$; 
	for each such partition, the index $k$ with $0\le k\le r$ and $n^{(k)}=t$ is uniquely determined.
\end{theorem}

In Section 2, we set up notation and terminology, and review some of the standard facts on the induced representation.
In Section 3, we explain how to calculate the ramified Siegel series.
More precisely, Section 3.1 sets up the orbit-decomposition framework;
Section 3.2 extends the Sato-Hironaka orbit and volume formulae to any nonarchimedean, non-dyadic $F$;
Section 3.3 applies them to write $S_t(B, s)^\omega$ as a finite $\Gamma_0$-orbit sum;
Section 3.4 introduces $\alpha_\psi(\pi)$ to evaluate the resulting Gauss integrals uniformly over $F$. 
In Section 4, we give an explicit formula for general $\varphi=f_t$.
Finally in Section 5, we give explicit values for $n=1, 2, 3$.

The author wishes to thank Keiichi Gunji of Chiba Institute of Technology for sending us his article \cite{Gunji2} when it is a preprint.
The author would like to gratefully thank Tamotsu Ikeda of Kyoto University for his constant support.

\section{The ramified Siegel series}

Let $F$ be a nonarchimedean local field of characteristic zero. 
We denote the ring of integers of $F$ by $\mathfrak{o}=\mathfrak{o}_F$. 
We write $\mathfrak{p}=\mathfrak{p}_F$, $\mathfrak{k}=\mathfrak{k}_F=\mathfrak{o}_F/\mathfrak{p}_F$ 
for the maximal ideal and the residue field of $\mathfrak{o}$, respectively. 
We fix a prime element $\pi\in\mathfrak{p}$.
Let $q$ denote the cardinality of $\mathfrak{k}$.
We will assume later that $q$ is odd.

The set of symmetric matrices of degree $n$ over $F$ is denoted by $\mathrm{Sym}_n(F)$,
and the subset of non-degenerate symmetric matrices is denoted by $S_n(F)$.

The symplectic group of degree $n$ over $F$ is defined by
$$G=\mathrm{Sp}_n(F):=
\{M\in \mathrm{GL}_{2n}(F)\mid{}^t\!Mw_nM=w_n\}$$
where we write ${}^t\!M$ for the transpose of a matrix $M$, and $w_n$ for
$\displaystyle
\begin{pmatrix}
	0 & -1_n \\ 1_n & 0
\end{pmatrix}$.
Let $K=\mathrm{Sp}_n(\mathfrak{o})$ denote a maximal compact subgroup of $G$.
Define
$$
P=\left\{
\begin{pmatrix}
	A & B \\ C & D
\end{pmatrix}
\in \mathrm{Sp}_n(F)\relmid| C=0 \right\},$$
the Siegel parabolic subgroup of $G$.
The decomposition $G=PK$ is called the Iwasawa decomposition.
Set
$$\Gamma=\left\{
\begin{pmatrix}
	A & B \\ C & D
\end{pmatrix}
\in K\relmid| C\equiv0\bmod\mathfrak{p}\right\}.$$

Let $\omega$ be a character of $F^\times$ satisfying $\omega^2=1$.
Later we only consider $\omega=\mathbf{1}$ the trivial character or $\omega=\chi$ which satisfies $\chi(x) = \langle \pi, x\rangle$; a ramified nontrivial character.
We define a character $\omega^\Gamma$ on $\Gamma$ as
$$\omega^\Gamma\left(
\begin{pmatrix}
	A & B \\ C & D
\end{pmatrix}
\right)=\omega(\det D)$$
and we write $\omega$ for $\omega^\Gamma$ by abuse of notation.

Later we only consider when B is non-degenerate, that is, $\det B\neq0$.

Define $I_n(\omega, s)=\mathrm{Ind}_P^G(\omega\circ|\det|^s)$ as the space of the induced representation,
the space of smooth functions on $G$ satisfying
$$f\left(
\begin{pmatrix}
	A & * \\ 0 & {}^t\!A^{-1}
\end{pmatrix}
g\right)=\omega(\det A)|\det A|^{s+\frac{n+1}{2}}f(g),$$
and we also define $I_n(\omega, s)^{\Gamma, \omega}$ as
$$I_n(\omega, s)^{\Gamma, \omega}=\{f\in I(\omega, s)\mid f(gk)=\omega(k)f(g)\,\, \text{for all}\,\, k\in\Gamma\}.$$

The double coset
$P\backslash G/\Gamma\simeq(P\cap K)\backslash K/\Gamma=\Gamma\backslash K/\Gamma$
is a finite set and one can choose a complete set of representatives $\{w_i\}_{0\le i\le n}$ by
$$w_i=\left(\begin{array}{rr|rr}
	1_{n-i} &&& \\
	&&& -1_i \\ \hline
	&& 1_{n-i} & \\
	& 1_i &&
\end{array}\right).$$
It is known that
$\displaystyle
\begin{pmatrix}
	A & B \\ C & D
\end{pmatrix}
\in K$ is an element of $\Gamma w_i\Gamma$ if and only if 
the matrix $C$, considered modulo $\mathfrak{p}$, has rank $i$.

Each $f\in I_n(\omega, s)^{\Gamma, \omega}$ is determined by its value on $K$, and hence
is determined by $\{f(w_i)\}_{0\le i\le n}$.
We give the following proposition.
\begin{proposition}
	Define $f_i\in I(\omega, s)^{\Gamma ,\omega}$ satisfying $f_i(w_j)=\delta_{ij}$.
	Then the $\mathbb{C}$-vector space $I(\omega, s)^{\Gamma ,\omega}$ is spanned by the functions $\{f_i\}_{0\le i\le n}$, in other words, 
	$$I_n(\omega, s)^{\Gamma, \omega}=\bigoplus_{i=0}^n\mathbb{C}f_i.$$
\end{proposition}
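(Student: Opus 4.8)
The plan is to restrict to the maximal compact $K$, recast $I_n(\omega,s)^{\Gamma,\omega}$ as a space of bi-equivariant functions on the finite double coset space $(P\cap K)\backslash K/\Gamma$, and then check a character-compatibility condition at each double coset, where the hypothesis $\omega^2=1$ does the essential work.

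First I would reduce to $K$. By the Iwasawa decomposition $G=PK$ and the induction property, every $f\in I_n(\omega,s)$ is determined by $f|_K$, which satisfies $f(pk)=\omega(\det A_p)f(k)$ for $p=\left(\begin{smallmatrix}A_p&*\\0&{}^t\!A_p^{-1}\end{smallmatrix}\right)\in P\cap K$ (here $|\det A_p|=1$), and for $f\in I_n(\omega,s)^{\Gamma,\omega}$ also $f(k\gamma)=\omega(\det D_\gamma)f(k)$ for $\gamma\in\Gamma$. Since $P\cap K\subseteq\Gamma$ and, as recalled above, $\{w_i\}_{0\le i\le n}$ represents $(P\cap K)\backslash K/\Gamma=\Gamma\backslash K/\Gamma$, we have $K=\bigsqcup_{i=0}^n(P\cap K)w_i\Gamma$. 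Hence the linear map $f\mapsto(f(w_0),\dots,f(w_n))$ is injective: if all $f(w_i)=0$ then, writing $k=pw_i\gamma$, the two equivariances give $f(k)=\omega(\det A_p)\omega(\det D_\gamma)f(w_i)=0$, so $f|_K\equiv0$ and $f\equiv0$. In particular $\dim I_n(\omega,s)^{\Gamma,\omega}\le n+1$, and once the $f_i$ are shown to exist they are automatically linearly independent (evaluate $\sum c_jf_j$ at $w_i$) and spanning (subtract $\sum_if(w_i)f_i$ from a given $f$).

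The content is therefore the existence of each $f_i$. Attempting to define it on its support by $f_i(pw_i\gamma):=\omega(\det A_p)\omega(\det D_\gamma)$ and by $f_i=0$ on the remaining cosets, one sees this is consistent exactly when $\omega(\det A_p)\omega(\det D_\gamma)=1$ whenever $pw_i\gamma=w_i$. Writing $\gamma=w_i^{-1}p^{-1}w_i\in\Gamma$, the claim is that $\omega(\det D_\gamma)=\omega(\det A_{p^{-1}})=\omega(\det A_p)$, for then the product equals $\omega(\det A_p)^2=1$. To see the claim I would reduce modulo $\mathfrak p$: since $q$ is odd, $1+\mathfrak p\subseteq(\mathfrak o^\times)^2$, so $\omega|_{\mathfrak o^\times}$ is inflated from $\mathfrak k^\times$, and it suffices to compute in $\mathrm{Sp}_n(\mathfrak k)$, where $\bar\gamma$ lies in the Siegel parabolic and hence $\det\bar D_\gamma=\det\bar A_\gamma^{-1}$; using $\omega^2=1$ this already gives $\omega(\det D_\gamma)=\omega(\det\bar A_\gamma)$. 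A direct computation of the conjugation $w_i^{-1}(\cdot)w_i$, with the Levi block of $p^{-1}$ written in $(n-i,i)$-block form, shows that the condition $\gamma\in\Gamma$ forces the lower-left block of $\bar A$ to vanish and yields $\det\bar A_\gamma=\det\bar A_{p^{-1}}\cdot(\det\bar A_{22})^{-2}$, where $\bar A_{22}$ is the lower-right $i\times i$ block; as this differs from $\det\bar A_{p^{-1}}$ by a square, $\omega$ does not see it, which proves the claim.

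I expect the only real obstacle to be this last block computation — identifying the stabilizer of $w_i$ and verifying that $w_i$-conjugation changes $\det A$ only by a perfect square (including checking that the signs in $w_i$ contribute trivially to the diagonal blocks). This is precisely where $\omega^2=1$ is indispensable: without it some double cosets would be incompatible and the dimension would fall below $n+1$. Granting it, injectivity from the second paragraph together with existence shows $\{f_i\}_{0\le i\le n}$ is a basis, i.e. $I_n(\omega,s)^{\Gamma,\omega}=\bigoplus_{i=0}^n\mathbb C f_i$.
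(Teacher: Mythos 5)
Your proposal is correct, and it does genuinely more than the paper: the paper never proves this proposition — it records only the injectivity half (each $f\in I_n(\omega,s)^{\Gamma,\omega}$ is determined by the values $f(w_i)$, via $G=PK$ and the decomposition $K=\bigsqcup_{i=0}^n(P\cap K)w_i\Gamma$) and then states the result as a known fact. The real content, which you isolate correctly, is the \emph{existence} of each $f_i$, i.e.\ the consistency of the prescription $f_i(pw_i\gamma)=\omega(\det A_p)\omega(\det D_\gamma)$ on the stabilizer $\{(p,\gamma)\in(P\cap K)\times\Gamma \mid pw_i\gamma=w_i\}$. I checked your conjugation computation and it is right: writing $\gamma=w_i^{-1}p^{-1}w_i$ with $A=A_{p^{-1}}$ in $(n-i,i)$-block form, the condition $C_\gamma\equiv 0\bmod\mathfrak{p}$ forces $\bar A_{21}=0$, and one finds
\begin{equation*}
\bar A_\gamma=\begin{pmatrix}\bar A_{11} & \bar B_{12}\\ 0 & {}^t\bar A_{22}^{-1}\end{pmatrix},
\qquad
\det\bar A_\gamma=\det\bar A\cdot(\det\bar A_{22})^{-2},
\end{equation*}
so that, since $\bar\gamma$ lies in the Siegel parabolic over $\mathfrak{k}$, $\omega(\det D_\gamma)=\omega(\det\bar A_\gamma)=\omega(\det A_p)$ (both equalities using $\omega^2=1$), whence $\omega(\det A_p)\omega(\det D_\gamma)=1$ as needed. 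Two features of your route are worth noting. First, your reduction of $\omega|_{\mathfrak{o}^\times}$ to the residue field via $1+\mathfrak{p}\subseteq(\mathfrak{o}^\times)^2$ (Hensel, $q$ odd) is also what makes $\omega^\Gamma$ a character of $\Gamma$ at all, since $\det D$ is multiplicative on $\Gamma$ only modulo $\mathfrak{p}$; the paper leaves this implicit when it defines $\omega^\Gamma$. Second, your argument makes explicit exactly where the hypotheses $\omega^2=1$ and $F$ non-dyadic are used, which the paper's bare citation conceals; the price is the block computation above, which is short and, as verified, correct.
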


Now we define the Siegel series.
\begin{definition}
	The Siegel series is an integral
	$$\int_{\mathrm{Sym}_n(F)}\varphi\left(w_n
	\begin{pmatrix}
		1 & X \\ 0 & 1
	\end{pmatrix}
	\right)\psi(-\mathrm{tr}(BX))dX,$$
	where $\psi$ is an additive character of $F$, $B\in\mathrm{Sym}_n(F)$
	and $\varphi\in I_n\left(\omega, s-\frac{n+1}{2}\right)^{\Gamma, \omega}$.
	$S_t(B, s)^\omega$ is denoted by the Siegel series when $\varphi=f_t$ ($0\le t\le n$).
\end{definition}
\begin{remark}
	When $\mathrm{Re}\,s\gg0$, the Siegel series is absolutely integrable.
	This integral easily extends to a meromorphic function on $\mathbb{C}$.
\end{remark}
\begin{remark}
	At the beginning we introduced the (ramified) Siegel series by a local integral, 
	whereas in what follows we work with the series $S_t(B, s)^\omega$.
	By Shimura \cite[\S 18.10]{Shimura}, the Euler $p$-factor $b_p^{(n)}(A, s)$ occurring in the Fourier 
	coefficient of the Siegel Eisenstein series coincides with $S_t(B, s)^\omega$ up to an explicit constant . 
	Hence the computation of $b_p^{(n)}(A, s)$ reduces to that of $S_t(B, s)^\omega$, and we will compute $S_t(B, s)^\omega$ below.
\end{remark}

\section{Calculation of the Siegel series associated with $\varphi=f_t$}
\subsection{Preparations for the orbit decomposition}

In this section, we consider the case where $\varphi=f_t$ ($0\le t\le n$). 
Let $B\in \mathrm{Sym}_n(F)$.
Recall that we define the Siegel series as
\begin{align*}
	S_t(B, s)^\chi
	&=\int_{\mathrm{Sym}_n(F)}f_t\left(w_n
	\begin{pmatrix}
		1 & X \\ 0 & 1
	\end{pmatrix}\right)
	\psi(-\mathrm{tr}(BX))dX \\
	&=\int_{\mathrm{Sym}_n(F)}f_t\left(
	\begin{pmatrix}
		0 & -1 \\ 1 & X
	\end{pmatrix}\right)
	\psi(-\mathrm{tr}(BX))dX.
\end{align*}
Note that the function $f_t$ is taken in the space $I_n\left(\omega, s-\frac{n+1}{2}\right)^{\Gamma, \omega}$.
We may assume that  $X$ is an invertible matrix since the measure of $\mathrm{Sym}_n(F)\backslash S_n(F)$ is zero.
We need the Iwasawa decomposition of the matrix
$\begin{pmatrix} 0 & -1 \\ 1 & X \end{pmatrix}$.

The following lemma is well known and is called the Jordan splitting:
\begin{lemma} \label{Jordan}
	For $X\in\mathrm{Sym}_n(\mathfrak{o})\cap S_n(F)$, there are $U\in\mathrm{GL}_n(\mathfrak{o})$ and
	diagonal matrix $Y$ such that $X={}^tUYU$.
	Moreover, when we write $Y=\mathrm{diag}(\alpha_1\pi^{p_1}, \cdots, \alpha_n\pi^{p_n})$
	$(\alpha_i\in \mathfrak{o}^\times, p_i\ge0)$ then
	for each $m\ge0$, the value
	$$\#\{i\mid p_i=m\}\quad\text{and}\quad\prod_{p_i=m} \alpha_i$$
	are uniquely determined by the matrix $X$, modulo ${\mathfrak{o}^\times}^2$.
\end{lemma}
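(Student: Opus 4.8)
The plan is to establish existence by an induction that diagonalizes $X$ through unimodular congruences, and then to prove the two uniqueness statements separately: the multiplicities $n_m=\#\{i\mid v_i=m\}$ via elementary divisor theory, and the products $\prod_{v_i=m}\alpha_i$ (necessarily only modulo $(\mathfrak{o}^\times)^2$, as the case $n=1$ already shows) via reduction modulo $\mathfrak{p}$ together with the classification of quadratic forms over the residue field $\mathfrak{k}$.

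First I would prove existence by induction on $n$, the case $X=0$ being trivial. Writing $X=(x_{ij})$, let $v$ be the least valuation occurring among all entries. The aim is to use a congruence $X\mapsto{}^tUXU$ with $U\in\mathrm{GL}_n(\mathfrak{o})$ to arrange that the $(1,1)$-entry has valuation $v$ and that the rest of the first row and column vanish, after which the induction hypothesis applied to the lower $(n-1)\times(n-1)$ block finishes. If some $x_{kk}$ has valuation $v$, a permutation moves it into position $(1,1)$; then $x_{1j}/x_{11}\in\mathfrak{o}$ because $v(x_{1j})\ge v=v(x_{11})$, so completing the square—a congruence by an upper-triangular unimodular $U$—clears the first row and column and leaves an integral symmetric block of size $n-1$. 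If no diagonal entry attains $v$ but some off-diagonal $x_{ij}$ does, then, crucially using that $q$ is odd so that $2\in\mathfrak{o}^\times$, adding the $j$-th row and column to the $i$-th produces the diagonal entry $x_{ii}+2x_{ij}+x_{jj}$ of valuation exactly $v$, which reduces to the previous case.

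For the multiplicities, I would observe that $X\mapsto{}^tUXU$ multiplies $X$ on both sides by elements of $\mathrm{GL}_n(\mathfrak{o})$ and therefore preserves the Smith normal form; hence $X$ and the diagonal matrix $Y$ share the same invariant factors, which for $Y$ are the $\pi^{v_i}$. Thus the multiset $\{v_1,\dots,v_n\}$ is intrinsic to $X$, and each $n_m$ is determined. For the product modulo squares I would peel off Jordan scales from the bottom. After scaling so that $\min_i v_i=0$, reduction modulo $\mathfrak{p}$ carries $Y$ to $\mathrm{diag}(\bar\alpha_i:v_i=0)$ together with a zero block, so $X\bmod\mathfrak{p}$ is $\mathfrak{k}$-congruent to a nondegenerate form of rank $n_0$ and discriminant $\prod_{v_i=0}\bar\alpha_i$. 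Over the finite field $\mathfrak{k}$ of odd order a nondegenerate symmetric form is classified by its rank and its discriminant modulo $(\mathfrak{k}^\times)^2$, and by Hensel's lemma a unit is a square in $\mathfrak{o}^\times$ exactly when its reduction is a square in $\mathfrak{k}^\times$; this pins down $\prod_{v_i=0}\alpha_i$ modulo $(\mathfrak{o}^\times)^2$, and the same reduction shows the rank-$n_0$ unimodular Jordan component is determined up to isometry. Splitting it off as an orthogonal summand and invoking Witt cancellation for unimodular lattices over the non-dyadic ring $\mathfrak{o}$, the complementary form is determined up to isometry; rescaling its Gram matrix by $\pi^{-1}$ produces a lattice of smaller rank and strictly smaller scales, to which the induction applies.

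I expect the last step—the per-scale uniqueness of the determinant class, which is precisely the uniqueness half of the non-dyadic Jordan splitting theorem—to be the main obstacle. Its delicate point is the cancellation that lets one strip off the lowest scale as a well-defined orthogonal summand: once Witt cancellation for unimodular lattices over $\mathfrak{o}$ is in hand, the reduction-modulo-$\mathfrak{p}$ computation together with Hensel's lemma treats every scale uniformly and the induction closes. Everything here relies on $F$ being non-dyadic; in the dyadic case the analogous statement fails and the products $\prod_{v_i=m}\alpha_i$ are no longer invariants.
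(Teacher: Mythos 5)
Your proposal is correct, but there is no proof in the paper to compare it with: the lemma is quoted there as the ``well-known'' Jordan splitting, and the standard sources (\cite{O'Meara}, \cite{Cassels}) sit in the bibliography without even being cited at this point. What you give is essentially the classical argument those sources contain: diagonalization by completing the square (using $2\in\mathfrak{o}^\times$), Smith normal form for the multiplicities, and reduction modulo $\mathfrak{p}$ together with Hensel's lemma for the per-scale discriminants. You also correct the statement where it needs correcting: as your $n=1$ example shows, $\prod_{v_i=m}\alpha_i$ is an invariant only modulo $(\mathfrak{o}^\times)^2$, not as an element of $\mathfrak{o}^\times$; this weaker form is all the paper ever uses, since the $\alpha_i$ enter the Siegel series only through the quadratic character $\chi$ and through $\#\{i\mid v_i=m\}$.

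One caution concerning the step you yourself flag as the crux. If you import ``Witt cancellation for $\mathfrak{o}$-lattices'' from the classification literature you risk circularity: in \cite{O'Meara}, cancellation over non-dyadic local rings is deduced from the uniqueness of the Jordan splitting, which is exactly what you are trying to prove. The non-circular route is to prove the extension form directly: an isometry between unimodular orthogonal summands of a lattice $L$ extends to an isometry of all of $L$. This has a short reflection proof valid over any local ring in which $2$ is a unit: if $x,y\in L$ have the same unit norm, then $b(x-y,x-y)+b(x+y,x+y)=4b(x,x)$ is a unit, so at least one of the two summands is a unit, and $x$ is carried to $y$ by one or two reflections in vectors of $L$ of unit norm (such reflections preserve $L$); induction on the rank of the unimodular summand, which admits an orthogonal basis because $2$ is a unit, finishes. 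With that lemma made explicit, cancelling the scale-zero component is legitimate, your induction on the rank closes, and the rest of your write-up is sound.
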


When we write $X={}^tUYU$ where $U\in\mathrm{GL}_n(\mathfrak{o})$ we have

$$
\begin{pmatrix}
	0 & -1 \\ 1 & {}^tUYU
\end{pmatrix}
=
\begin{pmatrix}
	U^{-1} & \\ & {}^tU
\end{pmatrix}
\begin{pmatrix}
	0 & -1 \\ 1 & Y
\end{pmatrix}
\begin{pmatrix}
	{}^tU^{-1} & \\ & U
\end{pmatrix}
$$
Then
\begin{align*}
	f_t\left(
	\begin{pmatrix}
		0 & -1 \\ 1 & X
	\end{pmatrix}
	\right)
	&=f_t\left(
	\begin{pmatrix}
		U^{-1} & \\ & {}^tU
	\end{pmatrix}
	\begin{pmatrix}
		0 & -1 \\ 1 & Y
	\end{pmatrix}
	\begin{pmatrix}
		{}^tU^{-1} & \\ & U
	\end{pmatrix}
	\right) \\
	&=\omega(\det U^{-1}{}^tU^{-1})\left|\det U^{-1}{}^tU^{-1}\right|^{-s}
	f_t\left(
	\begin{pmatrix}
		0 & -1 \\ 1 & Y
	\end{pmatrix}
	\right) \\
	&=f_t\left(
	\begin{pmatrix}
		0 & -1 \\ 1 & Y
	\end{pmatrix}
	\right)
\end{align*}
since $\omega^2=1$ and $\det U\in\mathfrak{o}^\times$.

We write
$Y=\begin{pmatrix}
	Y_1 & \\ & Y_2
\end{pmatrix}$
where $Y_1$ and $Y_2$ are diagonal matrices of degree $r$ and $n-r$ ($0\le r\le n$), respectively.
We assume that $Y_1\in M_r(\mathfrak{o})$ and $Y_2^{-1}\in\mathfrak{p}M_{n-r}(\mathfrak{o})$. Since
$$
\left(\begin{array}{rr|rr}
	0 && -1 & \\
	& 0 && -1 \\ \hline
	1 && Y_1 & \\
	& 1 && Y_2
\end{array}\right)
=
\left(\begin{array}{rr|rr}
	0 && -1 & \\
	& 0 && -1 \\ \hline
	1 && 0 & \\
	& 1 && Y_2
\end{array}\right)
\left(\begin{array}{rr|rr}
	1 && Y_1 & \\
	& 1 && 0 \\ \hline
	0 && 1 & \\
	& 0 && 1
\end{array}\right)
$$
so we may assume that $Y_1=0$.
Now we consider an Iwasawa decomposition;
$$
\left(\begin{array}{rr|rr}
	0 && -1 & \\
	& 0 && -1 \\ \hline
	1 && 0 & \\
	& 1 && Y_2
\end{array}\right)
=
\left(\begin{array}{rr|rr}
	1 && 0 & \\
	& 1 && -Y_2^{-1} \\ \hline
	0 && 1 & \\
	& 0 && 1
\end{array}\right)
\left(\begin{array}{rr|rr}
	1 && 0 & \\
	& Y_2^{-1} && 0 \\ \hline
	0 && 1 & \\
	& 0 && Y_2
\end{array}\right)
\left(\begin{array}{rr|rr}
	0 && -1 & \\
	& 1 && 0 \\ \hline
	1 && 0 & \\
	& Y_2^{-1} && 1
\end{array}\right).
$$
We note that ${}^tY_2=Y_2$, since $Y_2$ is diagonal.
Therefore in order to $f_t\left(
\begin{pmatrix}
	0 & -1 \\ 1 & Y
\end{pmatrix}\right)\neq0$,
we assume the rank of the matrix
$\begin{pmatrix}
	1 & \\ & Y_2^{-1}
\end{pmatrix}$
as mod $\mathfrak{p}$ $\left(=
\begin{pmatrix}
	1 & \\ & 0
\end{pmatrix}
\right)$ is $t$.
We define $S_{n, t}(F)$ the subset of $S_n(F)$ by
$$S_{n, t}(F)
=\{ X\in S_n(F)\mid (*): \#\{i \mid e_i<0\}=n-t\},$$
where the notation $e_i$ is the same as in Lemma 3.1.
Assume from now on that $X\in S_{n,t}(F)$.
From the discussion above, we have
\begin{align*}
	f_t\left(
	\begin{pmatrix}
		0 & -1 \\ 1 & X
	\end{pmatrix}\right)
	&=f_t\left(\left(
	\begin{array}{rr|rr}
		1 && 0 & \\
		& 1 && -Y_2^{-1} \\ \hline
		0 && 1 & \\
		& 0 && 1
	\end{array}\right)
	\left(\begin{array}{rr|rr}
		1 && 0 & \\
		& Y_2^{-1} && 0 \\ \hline
		0 && 1 & \\
		& 0 && Y_2
	\end{array}\right)
	\left(\begin{array}{rr|rr}
		0 && -1 & \\
		& 1 && 0 \\ \hline
		1 && 0 & \\
		& Y_2^{-1} && 1
	\end{array}\right)\right) \\
	&=\omega(\det Y_2^{-1})|\det Y_2^{-1}|^s
	f_t\left(\left(
	\begin{array}{rr|rr}
		0 && -1 & \\
		& 1 && 0 \\ \hline
		1 && 0 & \\
		& Y_2^{-1} && 1
	\end{array}\right)\right) \\
	&=\omega(\det Y_2)|\det Y_2|^{-s},
\end{align*}
so the ramified Siegel series can be written as
\begin{align*}
	S_t(B, s)^\omega
	&=\int_{S_{n, t}(F)}f_t\left(
	\begin{pmatrix}
		0 & -1 \\ 1 & X
	\end{pmatrix}\right)
	\psi(-\mathrm{tr}(BX))dX \\
	&=\int_{S_{n, t}(F)}
	\omega\left(\det Y_2\right)\left|\det Y_2\right|^{-s}\psi(-\mathrm{tr}(BX))dX.
\end{align*}

\subsection{Theorems of Sato and Hironaka}

For the proofs of the following results when $F=\mathbb{Q}_p$ we refer the reader to \cite{SatoHironaka}.
The same proof remains valid when $F$ is an arbitrary nonarchimedean, non-dyadic local field, using standard properties of a nonarchimedean local field. 

We put
$$S_n(F)=\{X\in\mathrm{Sym}_n(F)\mid \det X\neq0\},$$
and we define
$$\Gamma_0=\{\gamma=(\gamma_{ij})\in\mathrm{GL}_n(\mathfrak{o})\mid
\gamma_{ij}\in\mathfrak{p}\, (i>j)\}$$
which acts on $S_n(F)$ by $Y\mapsto \gamma\cdot Y=\gamma Y{}^t\gamma$.
We fix a non-square unit
$\delta\in\mathfrak{o}^\times\backslash(\mathfrak{o}^\times)^2$.
The following theorem determines the orbits of the action.
Put $I=\{1, 2, \cdots, n\}$ and consider the standard action of $\mathfrak{S}_n$ on $I$.

Later the notation $h_i$ is different from that in the original paper (written as $e_i$).

\begin{theorem}
	[\cite{SatoHironaka} Theorem 2.1]\label{SH1}
	Let $\Lambda_n$ be the collection of
	$(\sigma, h, \varepsilon)\in\mathfrak{S}_n\times\mathbb{Z}^n\times\{1, \delta\}^n$ satisfying
	$$\sigma^2=1,\,\, h_{\sigma(i)}=h_i\,\,(i\in I),\,\, \varepsilon_i=1\,(i\in I, \sigma(i)\neq i).$$
	For a $(\sigma, h, \varepsilon)\in\Lambda_n$, we define a symmetric matrix $S_{\sigma, h, \varepsilon}$ by
	$$S_{\sigma, h, \varepsilon}=(s_{ij}),\,\, s_{ij}=\varepsilon_i\pi^{h_i}\delta_{i, \sigma(j)},$$
	where $\delta_{i, \sigma(j)}$ is the Kronecker delta.
	Then the set $\{S_{\sigma, h, \varepsilon}\mid (\sigma, h, \varepsilon)\in\Lambda_n\}$ gives
	the complete set of representatives of $\Gamma_0$-equivalence classes in $S_n(F)$.
\end{theorem}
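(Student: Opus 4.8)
The plan is to prove both halves of an orbit classification: (existence) every $X\in S_n(F)$ can be carried by some $\gamma\in\Gamma_0$ into one of the normal forms $S_{\sigma,h,\varepsilon}$, and (uniqueness) two distinct triples $(\sigma,h,\varepsilon)\in\Lambda_n$ give matrices lying in distinct $\Gamma_0$-orbits.

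For existence I would argue by induction on $n$, peeling off one diagonal block at a time. The congruence action $X\mapsto\gamma X\,{}^t\gamma$ realizes simultaneous symmetric row/column operations: since $\gamma\in\Gamma_0$ is upper triangular modulo $\mathfrak p$, I may scale each basis vector by a unit, add an arbitrary $\mathfrak o$-multiple of a later basis vector to an earlier one, and add a $\mathfrak p$-multiple of an earlier vector to a later one. Working with the ``top'' flag $V_k=\langle e_{n-k+1},\dots,e_n\rangle$, which ${}^t\gamma$ preserves modulo $\mathfrak p$, I would locate within the last row and column the entry of smallest valuation and split into two cases. If that minimum is attained (or can be moved) on the diagonal, I complete the square --- here the non-dyadic hypothesis $2\in\mathfrak o^\times$ is essential --- to split off a $1\times1$ block; after normalizing its unit to $1$ or $\delta$ using $\mathfrak o^\times/(\mathfrak o^\times)^2=\{1,\delta\}$ and clearing the remainder of that row and column, the induction hypothesis applies to the complementary block and contributes a fixed point of $\sigma$. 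If instead some off-diagonal entry has strictly smaller valuation than every diagonal entry in its row, completing the square is impossible; I would then split off a $2\times2$ anti-diagonal block $\left(\begin{smallmatrix}0&\pi^h\\\pi^h&0\end{smallmatrix}\right)$, corresponding to a $2$-cycle of $\sigma$, and induct on the remaining $(n-2)$-dimensional form. Symmetry of $X$ forces these blocks to pair up into an involution, which is exactly the constraint $\sigma^2=1$, and the equal valuations on each block give $h_{\sigma(i)}=h_i$ together with $\varepsilon_i=1$ off the fixed-point set.

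For uniqueness I would produce $\Gamma_0$-invariants that recover $(\sigma,h,\varepsilon)$. The natural ones are the valuations, and where nonzero the square classes, of the determinants $M_k(X)$ of the bottom-right $k\times k$ submatrices, i.e. of the form restricted to $V_k$. Because ${}^t\gamma$ carries $V_k$ onto a lattice equal to $V_k$ modulo $\mathfrak p$ whose defining block has unit determinant, one checks that $v(M_k)$ is unchanged and that the square class of $M_k/\pi^{v(M_k)}$ is unchanged whenever $M_k\neq0$, since the correction terms coming from the $\mathfrak p$-part of $\gamma$ have strictly larger valuation and are invisible to the ultrametric. Evaluating these invariants on $S_{\sigma,h,\varepsilon}$, the set of indices $k$ at which $M_k$ vanishes records exactly which $2$-cycles of $\sigma$ straddle the cut between $V_k$ and its complement, hence determines $\sigma$; the valuations of the surviving $M_k$ recover $h$ with its ordering; and the square classes of the surviving $M_k$ recover $\varepsilon$ on the fixed points.

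I expect the uniqueness direction to be the main obstacle, and within it the interaction between the genuinely large (non-$\mathfrak p$) upper-triangular part of $\gamma$ and the minors $M_k$: a naive top-left minor is not invariant, since a single upper-triangular unipotent already changes it, so the argument must be anchored to the correct ``top'' flag and must track that the vanishing loci of the $M_k$ are stable and pinpoint the $2$-cycles. Handling these vanishing minors --- passing to $M_{k\pm1}$, or to the appropriate $2\times2$ minor, at each straddled cycle --- is the delicate bookkeeping; once it is in place, matching the invariants of $X$ with those of its normal form simultaneously shows that the reduction in the existence step is forced and that distinct triples are inequivalent.
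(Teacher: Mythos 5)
The paper itself contains no proof of this statement: it is quoted from \cite{SatoHironaka} (Theorem 2.1), and Section 3.1 only asserts that the proof given there for $\mathbb{Q}_p$ carries over to any non-archimedean non-dyadic $F$. So your argument must stand on its own, and it has a genuine gap at exactly the point you flag as the crux. The uniqueness step rests on the claim that the valuations (and square classes) of the bottom-right $k\times k$ corner minors $M_k$ are $\Gamma_0$-invariants, the correction terms from the $\mathfrak{p}$-part of $\gamma$ being ``invisible to the ultrametric.'' This is false. Take $n=2$, $X=\mathrm{diag}(1,\pi^{10})$ (already a normal form, with $\sigma=\mathrm{id}$, $h=(0,10)$, $\varepsilon=(1,1)$) and $\gamma=\left(\begin{smallmatrix}1&0\\ \pi&1\end{smallmatrix}\right)\in\Gamma_0$; then
\[
\gamma X\,{}^t\gamma=\begin{pmatrix}1&\pi\\ \pi&\pi^{2}+\pi^{10}\end{pmatrix},
\]
whose bottom-right $1\times1$ minor has valuation $2$, not $10$. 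The correction to $M_k$ involves entries of $X$ from outside $V_k$, damped only by a factor in $\mathfrak{p}^{2}$; since the exponents $h_i$ may differ across the cut by arbitrarily large amounts, a gain of $2$ in valuation cannot absorb it. Concretely, your recipe applied to $\gamma X\,{}^t\gamma$ ``recovers'' $h=(8,2)$ for a matrix lying in the orbit of $h=(0,10)$: the proposed invariants separate two points of one orbit, so they cannot separate orbits. No corner minor other than $\det X$ itself is invariant. Genuine invariants have to be built from data that $\Gamma_0$ actually preserves, namely the lattice chain $M_i=\mathfrak{p}e_1+\cdots+\mathfrak{p}e_i+\mathfrak{o}e_{i+1}+\cdots+\mathfrak{o}e_n$ stabilized by ${}^t\gamma$: for instance the fractional ideals generated by $\{{}^tx\,X\,y \mid x\in M_i,\ y\in M_j\}$ (gcd-type quantities, not determinants). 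Extracting $(\sigma,h,\varepsilon)$ from such data, and in particular pinning down the positions of the $2$-cycles and the units $\varepsilon_i$, is precisely the nontrivial content of the Hironaka--Sato proof.

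The existence half breaks at the same point, because the dichotomy is stated in terms of raw valuations while $\Gamma_0$-reduction is governed by position-weighted ones. Counterexample: $X=\left(\begin{smallmatrix}1&1\\1&0\end{smallmatrix}\right)$. The minimal valuation $0$ is attained on the diagonal (at $x_{11}$), so your recipe says to complete the square and split off a $1\times1$ block. But clearing the entry $x_{12}$ requires $e_2\mapsto e_2-x_{11}^{-1}x_{12}e_1=e_2-e_1$, a lower-triangular operation with unit coefficient, which is not in $\Gamma_0$; and indeed $X$ is not $\Gamma_0$-equivalent to any diagonal matrix, since for $\gamma=\left(\begin{smallmatrix}a&b\\ c&d\end{smallmatrix}\right)\in\Gamma_0$ the $(2,1)$ entry of $\gamma X\,{}^t\gamma$ equals $ad+ac+bc\equiv ad\not\equiv0\bmod\mathfrak{p}$. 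Instead $\gamma=\left(\begin{smallmatrix}1&-1/2\\ 0&1\end{smallmatrix}\right)$ carries $X$ to the anti-diagonal form $\left(\begin{smallmatrix}0&1\\ 1&0\end{smallmatrix}\right)$. So ``minimum on the diagonal $\Rightarrow$ $1\times1$ block'' is wrong; the correct case distinction must ask whether the clearing operations can be performed with coefficients in $\mathfrak{o}$ above the diagonal but in $\mathfrak{p}$ below it, i.e.\ it must compare $v(x_{ij})$ with shifts depending on the positions $i,j$. Both halves of your argument need this Iwahori-weighted bookkeeping; with unweighted valuations, both the reduction and the invariants fail.
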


Before writing the second theorem, we need some preparation.

For $Y\in S_n(F)$, we define
$$\alpha(\Gamma_0; Y)=\lim_{l\to\infty}q^{-ln(n-1)/2}N_l(\Gamma_0; Y),$$
where
$$N_l(\Gamma_0; Y)=\#\{\gamma\in\Gamma_0\bmod \mathfrak{p}^l
\mid \gamma Y{}^t\gamma\equiv Y\bmod \mathfrak{p}^l\}.$$

We normalize the Haar measures $d\gamma$ on $M_n(F)$ and $dY$ on $\mathrm{Sym}_n(F)$,
respectively, by
$$\int_{M_n(\mathfrak{o})}d\gamma=1,\quad \int_{\mathrm{Sym}_n(\mathfrak{o})}dY=1.$$
\begin{theorem}
	[\cite{SatoHironaka} Proposition 1.2]\label{SH2}
	Let $Y_0\in S_n(F)$ then the following integral formula holds for any continuous function
	$f$ on $\Gamma_0\cdot Y_0$.
	$$\int_{\Gamma_0\cdot Y_0}f(Y)dY=\alpha(\Gamma_0; Y_0)^{-1}\int_{\Gamma_0}f(\gamma Y_0{}^t\gamma)d\gamma.$$
\end{theorem}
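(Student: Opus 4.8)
The plan is to prove the equivalent assertion that the pushforward of the Haar measure $d\gamma$ under the orbit map $\phi\colon\Gamma_0\to\Gamma_0\cdot Y_0$, $\gamma\mapsto\gamma Y_0{}^t\gamma$, equals $\alpha(\Gamma_0;Y_0)$ times the restriction of $dY$ to the orbit. Once the identity of measures $\phi_*(d\gamma)=\alpha(\Gamma_0;Y_0)\,dY|_{\Gamma_0\cdot Y_0}$ is established, integrating a continuous $f$ against both sides gives the stated formula immediately, since the orbit is compact and both integrals converge.

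First I would record two structural facts together with the key geometric input. The linear automorphism $Y\mapsto\gamma Y{}^t\gamma$ of the vector space $\mathrm{Sym}_n(F)$ has determinant $(\det\gamma)^{n+1}$, so it scales $dY$ by $|\det\gamma|^{n+1}=1$ for every $\gamma\in\Gamma_0\subset\mathrm{GL}_n(\mathfrak{o})$; hence $dY|_{\Gamma_0\cdot Y_0}$ is $\Gamma_0$-invariant. Likewise the equivariance $\phi(\delta\gamma)=\delta\cdot\phi(\gamma)$ together with left-invariance of $d\gamma$ shows $\phi_*(d\gamma)$ is $\Gamma_0$-invariant. The crucial input is that the orbit is \emph{open} in $S_n(F)$: because $Y_0$ is invertible and $2\in\mathfrak{o}^\times$ (this is exactly where I use that $F$ is non-dyadic), the differential $X\mapsto XY_0+Y_0{}^tX$ of $\phi$ is surjective onto $\mathrm{Sym}_n(F)$, so $\phi$ is a submersion and, $\Gamma_0$ being open and compact, $\Gamma_0\cdot Y_0$ is an open compact subset of $S_n(F)$ on which $\Gamma_0$ acts transitively.

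Next I would invoke uniqueness. The orbit is a compact homogeneous space $\Gamma_0/H$, where $H$ is the stabilizer of $Y_0$ in $\Gamma_0$, and therefore carries a unique $\Gamma_0$-invariant measure up to a positive scalar. Consequently $\phi_*(d\gamma)=c\,dY|_{\Gamma_0\cdot Y_0}$ for some $c>0$, and comparing total masses reduces the problem to the single constant $c=\mathrm{vol}_{d\gamma}(\Gamma_0)/\mathrm{vol}_{dY}(\Gamma_0\cdot Y_0)$.

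Finally I would compute $\mathrm{vol}_{dY}(\Gamma_0\cdot Y_0)$ by a counting argument tied to the definition of $\alpha$. For $l$ large, openness of the orbit forces every residue ball of level $\mathfrak{p}^l$ meeting the orbit to lie inside it, so $\mathrm{vol}_{dY}(\Gamma_0\cdot Y_0)=q^{-ln(n+1)/2}\,\#\big((\Gamma_0\cdot Y_0)\bmod\mathfrak{p}^l\big)$. The orbit--stabilizer relation modulo $\mathfrak{p}^l$ gives $\#\big((\Gamma_0\cdot Y_0)\bmod\mathfrak{p}^l\big)=\#(\Gamma_0\bmod\mathfrak{p}^l)/N_l(\Gamma_0;Y_0)$, while the normalization $\int_{M_n(\mathfrak{o})}d\gamma=1$ gives $\#(\Gamma_0\bmod\mathfrak{p}^l)=q^{ln^2}\mathrm{vol}_{d\gamma}(\Gamma_0)$. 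Since $n^2-\tfrac{n(n+1)}{2}=\tfrac{n(n-1)}{2}$, passing to the limit yields $\mathrm{vol}_{dY}(\Gamma_0\cdot Y_0)=\mathrm{vol}_{d\gamma}(\Gamma_0)\cdot\alpha(\Gamma_0;Y_0)^{-1}$, whence $c=\alpha(\Gamma_0;Y_0)$. The main obstacle is the bookkeeping in this last step: one must verify that the level-$\mathfrak{p}^l$ congruences defining $N_l$ genuinely stabilize—which requires both the openness of the orbit and, when $Y_0\notin\mathrm{Sym}_n(\mathfrak{o})$, a preliminary rescaling of the reference lattice so that $\gamma Y_0{}^t\gamma\equiv Y_0$ depends only on $\gamma\bmod\mathfrak{p}^l$—and that the normalized limit defining $\alpha(\Gamma_0;Y_0)$ exists and is strictly positive.
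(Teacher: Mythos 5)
You should first know that the paper itself contains no proof of this statement: it is quoted verbatim from Sato--Hironaka (their Proposition~1.2), and Section~3.1 merely asserts that the same proof remains valid over any non-archimedean non-dyadic $F$. So there is no internal argument to compare yours against, and your proposal must stand on its own. On its own it is the standard argument, and its skeleton is correct: the restriction of $dY$ to the orbit is $\Gamma_0$-invariant because $Y\mapsto\gamma Y{}^t\gamma$ has determinant $(\det\gamma)^{n+1}$ of absolute value $1$; $\phi_*(d\gamma)$ is invariant by equivariance; the orbit is open and compact because the differential $X\mapsto XY_0+Y_0{}^tX$ is surjective (take $X=\tfrac{1}{2}SY_0^{-1}$ --- note this only needs $2\neq0$ in $F$, i.e.\ characteristic $0$, not non-dyadicity, so that hypothesis is misattributed but harmlessly so); uniqueness of the invariant measure on the compact homogeneous space $\Gamma_0/H$ gives proportionality; and the exponent arithmetic $n^2-\tfrac{n(n+1)}{2}=\tfrac{n(n-1)}{2}$ matches the normalization of $\alpha$.

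The step you dismiss as ``bookkeeping,'' however, is the crux, and you should be aware that it is not optional: with the literal reading of $N_l$ as quoted, the identity is \emph{false} precisely in the case the paper uses it. Take $n=1$, $\Gamma_0=\mathfrak{o}^\times$, $Y_0=\pi^{-1}$. The condition $\gamma^2\pi^{-1}\equiv\pi^{-1}\bmod\mathfrak{p}^l$ means $\gamma\equiv\pm1\bmod\mathfrak{p}^{l+1}$; it does not descend to residues mod $\mathfrak{p}^l$, and the natural fallback (counting residues of solutions) gives $N_l=2$ for every $l$, hence $\alpha=2$. But $\mathrm{vol}_{dY}(\Gamma_0\cdot Y_0)=\mathrm{vol}\bigl(\pi^{-1}(\mathfrak{o}^\times)^2\bigr)=q(1-q^{-1})/2$ and $\mathrm{vol}_{d\gamma}(\Gamma_0)=1-q^{-1}$, which forces $\alpha=2q^{-1}$. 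Since every representative $S_{\sigma,h,\varepsilon}$ actually used in the paper has $h_i\le-1$, every orbit in the application is of this non-integral kind. The repair is exactly the volume reading that your ``rescaling'' produces: set $K_l=\{\gamma\in\Gamma_0\mid\gamma Y_0{}^t\gamma\equiv Y_0\bmod\mathfrak{p}^l\}$ (a subgroup, open because it contains $\Gamma_0\cap(1+\mathfrak{p}^{l+m}M_n(\mathfrak{o}))$ when $\pi^mY_0$ is integral) and interpret $N_l=q^{ln^2}\mathrm{vol}_{d\gamma}(K_l)$, i.e.\ count solutions mod $\mathfrak{p}^{l+m}$ and divide by $q^{mn^2}$. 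Then your orbit--stabilizer step becomes $\#(\Gamma_0\cdot Y_0\bmod\mathfrak{p}^l)=\mathrm{vol}(\Gamma_0)/\mathrm{vol}(K_l)$, the quantity $q^{ln(n+1)/2}\mathrm{vol}(K_l)$ is \emph{eventually constant} in $l$ (so the existence and positivity of the limit defining $\alpha$ --- your other worry --- come for free), and the constant of proportionality is $\alpha(\Gamma_0;Y_0)$ as you claim. This is also the only normalization consistent with Theorem~\ref{SH3}, whose formula gives $\alpha(\Gamma_0;\varepsilon\pi^h)=2q^{h}$ for $n=1$, not $2$. So: your route is the right one and, once this step is carried out rather than flagged, it is a complete proof; left as stated, the argument's key constant is pinned to a definition that does not make sense for the matrices it must be applied to.
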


Now we introduce some notation.
For a $(\sigma, h, \varepsilon)\in\Lambda_n$, we define the integers $\lambda_0, \lambda_1, \cdots, \lambda_r$ by
$$\{h_1, \cdots, h_n\}=\{\lambda_0, \lambda_1, \cdots, \lambda_r\}
\quad \text{with} \quad \lambda_0<\lambda_1<\cdots<\lambda_r.$$
We also put
$$I_i=\{j\in I\mid h_j=\lambda_i\}\quad (0\le i\le r).$$
Then $I_0, \cdots, I_r$ are disjoint $\sigma$-stable subsets of $I$ and $I=I_0\cup I_1\cup\cdots\cup I_r$.
We also put
$$I^{(i)}=I_i\cup I_{i+1}\cup\cdots\cup I_r\quad (0\le i\le r).$$
We set
$$n_i=\#(I_i),\,\, n^{(i)}=\#(I^{(i)})=n_i+\cdots+n_r,\,\, n(i)=\frac{n^{(i)}(n^{(i)}+1)}{2}.$$
Put
$$\nu_i=\lambda_i-\lambda_{i-1}\,\, (1\le i\le r),\quad \nu_0=\lambda_0.$$
Then $\nu_0\in\mathbb{Z}$ and $\nu_1, \cdots, \nu_r\in\mathbb{Z}_{>0}$.
\begin{theorem}
	[\cite{SatoHironaka} Theorem 2.2]\label{SH3}
	Put
	\begin{align*}
		c_1(\sigma)&=\#\{i\in I\mid \sigma(i)=i\}, \\
		c_2(\sigma)&=\frac{1}{2}\#\{i\in I\mid \sigma(i)\neq i\}, \\
		t(\sigma, \{I_i\})&=
		\sum_{l=0}^r\#\{(i, j)\in I_l\times I_l\mid i<j<\sigma(i),\,\, \sigma(j)<\sigma(i)\}, \\
		\tau(\{I_i\})&=
		\sum_{l=1}^r\#\{(i, j)\in I_l\times (I_0\cup\cdots\cup I_{l-1})\mid j<i\}.
	\end{align*}
	Then we have
	$$\alpha(\Gamma_0; S_{\sigma, h, \varepsilon})
	=2^{c_1(\sigma)}(1-q^{-1})^{c_2(\sigma)}q^{c(\sigma, h, \varepsilon)},$$
	where
	$$c(\sigma, h, \varepsilon)
	=-\frac{n(n-1)}{2}+\tau(\{I_i\})+t(\sigma, \{I_i\})+c_2(\sigma)+\sum_{l=0}^r\nu_ln(l).$$
\end{theorem}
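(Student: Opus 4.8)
The plan is to read $\alpha(\Gamma_0; S_{\sigma, h, \varepsilon})$ as the local density (normalized volume) of the stabilizer of $S:=S_{\sigma, h, \varepsilon}$ for the twisted action $\gamma\mapsto\gamma S\,{}^t\gamma$, and then to evaluate that volume by decomposing $S$ into its valuation layers. First I would justify replacing the point-count $N_l$ by a genuine volume. The map $\Phi\colon\gamma\mapsto\gamma S\,{}^t\gamma$ on $\Gamma_0$ has, at the identity, differential $\xi\mapsto\xi S+S\,{}^t\xi$; its kernel is the set of $\xi$ with $\xi S$ antisymmetric, which is the Lie algebra of the orthogonal group $O(S)$ and has dimension $n(n-1)/2$. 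Since $q$ is odd, $O(S)$ is smooth over $\mathfrak{o}$, so $\Phi$ is a submersion onto the orbit, the congruence fibres are translates of the stabilizer, and a standard $p$-adic smoothness argument shows that $q^{-ln(n-1)/2}N_l$ converges to the normalized volume of $H:=\{\gamma\in\Gamma_0\mid\gamma S\,{}^t\gamma=S\}$. This accounts for the normalizing exponent $-\tfrac{n(n-1)}{2}$.

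Next I would exploit the block structure of $S$. After reordering $I$ so that the layers $I_0,\dots,I_r$ (grouped by the common valuations $\lambda_0<\cdots<\lambda_r$) appear consecutively, $S$ becomes block-diagonal with $i$-th block $\pi^{\lambda_i}S_i$, where $S_i$ is a unimodular symmetric form on $I_i$ determined by $\sigma|_{I_i}$: a diagonal unit $\varepsilon_j$ for each fixed point $j=\sigma(j)$, and a hyperbolic plane $\left(\begin{smallmatrix}0&1\\1&0\end{smallmatrix}\right)$ for each transposition $\{j,\sigma(j)\}$. Writing $\gamma\in\Gamma_0$ in the same block form, the condition $\gamma S\,{}^t\gamma\equiv S$ separates into diagonal-block conditions, which force each $\gamma_{ii}$ into $O(S_i)$ modulo lower-order corrections, and off-diagonal-block conditions, which confine each $\gamma_{ij}$ $(i\ne j)$ to a power of $\mathfrak{p}$ dictated by the valuation gap together with the triangularity built into $\Gamma_0$. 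The diagonal orthogonal factors are exactly where $2^{c_1(\sigma)}$ and $(1-q^{-1})^{c_2(\sigma)}$ originate, and here the distinction between $\Gamma_0$ and $K$ is essential: for a fixed point the stabilizer $\{\pm1\}$ is diagonal, hence lies in $\Gamma_0$ and contributes a factor $2$, whereas for a transposition the reflection swapping the hyperbolic pair is not upper-triangular modulo $\mathfrak{p}$ and is excluded, leaving only the split torus $\mathrm{SO}(S_i)\cong\mathfrak{o}^\times$ of volume $1-q^{-1}$.

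It then remains to compute the $q$-power contributed by the off-diagonal blocks and by the scaling $\pi^{\lambda_i}$. I would count, layer by layer, which coordinate entries of $\gamma$ are free and which are confined to $\mathfrak{p}^k$: pairs crossing between distinct layers under the $\Gamma_0$-triangularity produce $\tau(\{I_i\})$, the intra-layer pairs twisted by $\sigma$ produce $t(\sigma,\{I_i\})$, and the uniform rescaling of the lower-right tail block of size $n^{(l)}$ by $\pi^{\nu_l}$ produces the telescoping sum $\sum_l\nu_l\,n(l)$ with $n(l)=\tfrac{n^{(l)}(n^{(l)}+1)}{2}$; the remaining dimension bookkeeping for the rank-two transposition blocks supplies the extra $+c_2(\sigma)$. (As a sanity check, a single hyperbolic plane gives $\mathrm{vol}(\mathfrak{o}^\times)=1-q^{-1}$, and a single fixed point gives $\#\{\pm1\}=2$, matching the formula for $n=1,2$.) This last step is cleanest to organize as a recursion that peels off the lowest layer and rescales the remaining tail, which makes the telescoping transparent. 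Multiplying the orthogonal factors by the accumulated $q$-power and collecting terms yields the claimed value of $c(\sigma,h,\varepsilon)$.

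The hard part will be the off-diagonal bookkeeping—pinning down the exact power of $\mathfrak{p}$ to which each block entry $\gamma_{ij}$ is confined, so that $\tau(\{I_i\})$ and $t(\sigma,\{I_i\})$ emerge precisely rather than up to an additive error. Finally I would verify that nothing above is special to $F=\mathbb{Q}_p$: the classification of unimodular symmetric forms used in the block decomposition holds over any non-dyadic local field (once $2$ is a unit a form is determined by its rank and discriminant), every estimate is expressed through the residue cardinality $q$, and $q$ odd is used only to separate $\pm1$, to guarantee this classification, and to make $O(S)$ smooth. Hence the Sato--Hironaka computation transfers verbatim to an arbitrary non-archimedean non-dyadic $F$.
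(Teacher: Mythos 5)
First, a point of comparison: the paper itself gives no proof of this statement. It is quoted verbatim from Sato--Hironaka, and the paper's only original claim here is that the $\mathbb{Q}_p$-proof carries over to any non-archimedean non-dyadic $F$; Sato--Hironaka's actual proof is a direct computation of the congruence counts $N_l(\Gamma_0;S_{\sigma,h,\varepsilon})$ for the normal forms, with respect to the fixed coordinate order. Your proposal must therefore stand on its own, and its foundational step fails. You claim that since $q$ is odd the naive stabilizer group scheme $O(S)$ is smooth over $\mathfrak{o}$, that the congruence fibres are tubes around $H=\{\gamma\in\Gamma_0\mid \gamma S\,{}^t\gamma=S\}$, and hence that $\alpha(\Gamma_0;S)$ equals the normalized volume of $H$. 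Smoothness holds only when $S$ is modular (all $h_i$ equal); for mixed valuations it fails: for $S=\mathrm{diag}(1,\pi)$ the special fibre of $\{\gamma S\,{}^t\gamma=S\}$ is cut out by $a^2=1,\ ac=0,\ c^2=0$, hence is non-reduced of dimension $2>1=\tfrac{n(n-1)}{2}$. Concretely, take $n=1$ and $S=(\pi^h)$ with $h\ge1$: then $H=\{\pm1\}$ has normalized count $2$, while the congruence $\gamma^2\pi^h\equiv\pi^h\bmod\mathfrak{p}^l$ only forces $\gamma\equiv\pm1\bmod\mathfrak{p}^{l-h}$, so $N_l=2q^h$ and $\alpha=2q^h$ — exactly what the theorem asserts, since $c(\sigma,h,\varepsilon)=\nu_0n(0)=h$. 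Thus the $q$-power $q^{c(\sigma,h,\varepsilon)}$ is produced precisely by congruence solutions that do \emph{not} lift to exact stabilizer elements, i.e., by the failure of the smoothness you assumed. Your step 3 then tries to recover these powers from ``off-diagonal bookkeeping,'' which contradicts step 1 (if $\alpha$ really were $\mathrm{vol}(H)$, there would be nothing left to book-keep) rather than completing it.

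Second, the reduction in your step 2 — ``after reordering $I$ so that the layers $I_0,\dots,I_r$ appear consecutively, $S$ becomes block-diagonal'' — is not available. The group $\Gamma_0$ (upper triangular mod $\mathfrak{p}$) is not normalized by permutation matrices, so reordering replaces $\Gamma_0$ by a different group and changes the quantity being computed. The terms $\tau(\{I_i\})$ and $t(\sigma,\{I_i\})$ measure exactly how the layers interleave in the \emph{original} order (e.g.\ $\tau$ counts pairs $j<i$ with $i$ in a strictly higher layer); after your reordering, $\tau$ degenerates to its maximal value $\sum_{l}n_l(n_0+\cdots+n_{l-1})$, which is wrong for general $h$. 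Since these interleaving terms, together with the non-liftable congruence solutions above, constitute the entire content of the formula, and since your own write-up defers the decisive estimates (``the hard part will be the off-diagonal bookkeeping'') to future work, the proposal does not amount to a proof. Your closing paragraph on transferring the argument from $\mathbb{Q}_p$ to general non-dyadic $F$ is the one point that matches what the paper itself asserts, but it presupposes a proof that the earlier steps do not supply.
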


For $a\in F$, we put
$$I(a)=\int_{\mathfrak{o}}\psi(ax^2)dx,\,\, I^*(a)=\int_{\mathfrak{o}^\times}\psi(ax^2)dx=I(a)-\frac{1}{q}I(a\pi^2).$$
For $T, Y\in S_n(F)$, put
$$\mathscr{G}_{\Gamma_0}(Y, T)
=\int_{\Gamma_0}\psi(-\mathrm{tr}(Y\cdot T[\gamma]))d\gamma.$$
Here we write $T[\gamma]={}^t\gamma T\gamma$.

\begin{theorem}
	[\cite{SatoHironaka} Proposition 3.3]\label{SH4}
	Let $T=\mathrm{diag}(v_1\pi^{\beta_1}, v_2\pi^{\beta_2}, \cdots, v_n\pi^{\beta_n})$ ($v_i\in\mathfrak{o}^\times,\,
	\beta_i\in\mathbb{Z}$).
	For $(\sigma, h, \varepsilon)\in\Lambda_n$, the character sum $\mathscr{G}_{\Gamma_0}(S_{\sigma, h, \varepsilon}, T)$
	vanishes unless
	\begin{align*}
	h_i\ge \begin{cases}
		-\beta_i-1 &\mathrm{if}\,\, \sigma(i)\le i \\
		-\beta_i &\mathrm{if}\,\, \sigma(i)>i
	\end{cases} \tag{T}
	\end{align*}
	for any $i\in I$.
	When the condition above is satisfied, we have
	\begin{align*}
		\mathscr{G}_{\Gamma_0}(S_{\sigma, h, \varepsilon}, T)
		=&(1-q^{-1})^{2c_2(\sigma)}q^{-\frac{n(n-1)}{2}+d(\sigma, h, \beta)} \\
		&\times\prod_{\substack{i=1 \\ \sigma(i)=i}}^n
		\left\{I^*(-\varepsilon_iv_i\pi^{h_i+\beta_i})\prod_{k=1}^{i-1}I(-\varepsilon_iv_k\pi^{h_i+\beta_k})
		\prod_{k=i+1}^nI(-\varepsilon_iv_k\pi^{h_i+\beta_k+2})\right\},
	\end{align*}
	where
	$$
	d(\sigma, h, \beta)
	=\sum_{\substack{i=1 \\ \sigma(i)>i}}^n \left\{
	\sum_{k=1}^{i-1}\min\{h_i+\beta_k, 0\}+\sum_{k=i+1}^{\sigma(i)-1}\min\{h_i+\beta_k+1, 0\}
	+\sum_{k=\sigma(i)+1}^n\min\{h_i+\beta_k+2, 0\}\right\}.
	$$
\end{theorem}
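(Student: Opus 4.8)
Here is how I would approach the proof. The plan is to reduce the matrix integral $\mathscr{G}_{\Gamma_0}(S_{\sigma,h,\varepsilon},T)$ to a product of elementary one- and two-variable Gaussian integrals, by combining the block structure of the representative $S_{\sigma,h,\varepsilon}$ with the ``triangular'' row structure of $\Gamma_0$. Writing $g_m=(\gamma_{m1},\dots,\gamma_{mn})$ for the $m$-th row of $\gamma$ and recalling $T=\mathrm{diag}(v_1\pi^{\beta_1},\dots,v_n\pi^{\beta_n})$, a direct computation of the trace gives
$$\mathrm{tr}\big(S_{\sigma,h,\varepsilon}\,{}^t\gamma T\gamma\big)=\sum_{m=1}^n v_m\pi^{\beta_m}\,S_{\sigma,h,\varepsilon}[g_m],\qquad S_{\sigma,h,\varepsilon}[x]={}^tx\,S_{\sigma,h,\varepsilon}\,x,$$
so the integrand factors as one factor per row. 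Since $\sigma^2=1$, the matrix $S_{\sigma,h,\varepsilon}$ is, after reordering, a direct sum of $1\times1$ blocks $(\varepsilon_i\pi^{h_i})$ attached to the fixed points $\sigma(i)=i$ and of $2\times2$ anti-diagonal blocks attached to the two-cycles; hence $S_{\sigma,h,\varepsilon}[x]=\sum_{\sigma(i)=i}\varepsilon_i\pi^{h_i}x_i^2+\sum_{i<\sigma(i)}2\pi^{h_i}x_ix_{\sigma(i)}$. Here the non-dyadic hypothesis enters, since $2\in\mathfrak{o}^\times$ forces each cross term to have valuation exactly $h_i$.

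Next I would use that $\Gamma_0$ is, as a measure space, the direct product over rows of the domains $R_m=\{g_m : \gamma_{mj}\in\mathfrak{p}\ (j<m),\ \gamma_{mm}\in\mathfrak{o}^\times,\ \gamma_{mj}\in\mathfrak{o}\ (j>m)\}$, and that the additive measure on $M_n(\mathfrak{o})$ restricts to a Haar measure on $\Gamma_0$ because left and right multiplication by $\Gamma_0\subset\mathrm{GL}_n(\mathfrak{o})$ has unit Jacobian. As the integrand factors over rows and, within each row, the quadratic form splits over the disjoint $\sigma$-orbits, the whole integral becomes
$$\mathscr{G}_{\Gamma_0}(S_{\sigma,h,\varepsilon},T)=\prod_{m=1}^n\ \prod_{\text{$\sigma$-orbits }O}\ \int_{\prod_{j\in O}D_{m,j}}\psi\big(-v_m\pi^{\beta_m}S_{\sigma,h,\varepsilon}[x_O]\big)\,\prod_{j\in O}dx_j,$$
where $D_{m,j}=\mathfrak{p},\mathfrak{o}^\times,\mathfrak{o}$ according as $j<m$, $j=m$, $j>m$.

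Then I would evaluate the orbit integrals. For a fixed point $i$ the variable $x_i$ appears only in $\varepsilon_i\pi^{h_i}x_i^2$: the row $m=i$ produces $I^*(-\varepsilon_iv_i\pi^{h_i+\beta_i})$, a row $m<i$ produces $I(-\varepsilon_iv_m\pi^{h_i+\beta_m})$, and a row $m>i$ produces $q^{-1}I(-\varepsilon_iv_m\pi^{h_i+\beta_m+2})$ after the substitution $x\mapsto\pi x$ on $\mathfrak{p}$; collecting over all rows reproduces the product over fixed points together with a structural factor $q^{-(n-i)}$, and the vanishing of $I^*$ for argument of valuation $\le-2$ gives the condition $h_i\ge-\beta_i-1$. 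For a two-cycle $\{k,l=\sigma(k)\}$ with $k<l$ each row yields a bilinear integral $\int\!\!\int\psi(axy)\,dx\,dy$ with $v(a)=\beta_m+h_k$; the five cases $m<k$, $m=k$, $k<m<l$, $m=l$, $m>l$ are computed from $\int_\mathfrak{o}\psi(cx)\,dx\in\{0,1\}$ and $\mathrm{vol}(\mathfrak{o}^\times)=1-q^{-1}$. The rows $m=k$ and $m=l$ each contribute a factor $1-q^{-1}$ and impose $h_k\ge-\beta_k$ and $h_l\ge-\beta_l-1$, precisely the stated inequalities for $i=k$ (where $\sigma(i)>i$) and $i=l$ (where $\sigma(i)\le i$); the remaining rows contribute the $\min$-powers.

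Finally I would collect the exponents. Each two-cycle gives exactly two factors $1-q^{-1}$, totalling $(1-q^{-1})^{2c_2(\sigma)}$, and the $\min$-powers from the two-cycles assemble term by term into $q^{d(\sigma,h,\beta)}$. The purely structural $q$-powers (those coming neither from $I,I^*$ nor from the $\min$-terms) contribute $n-i$ for a fixed point and $(n-k)+(n-l)$ for a two-cycle $\{k,l\}$, so their total is $\sum_{i\in I}(n-i)=n(n-1)/2$, giving the prefactor $q^{-n(n-1)/2}$. I expect the main obstacle to be precisely this bookkeeping: verifying that the two-cycle case analysis matches $d(\sigma,h,\beta)$ exactly and that all structural powers telescope to $-n(n-1)/2$. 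The underlying integrals are elementary and field-independent, so the only genuine use of ``non-dyadic'' is in keeping the valuations of the cross terms (and the standard evaluations of $I$ and $I^*$) equal to $h_i$, which is what lets the proof of \cite{SatoHironaka} go through verbatim over $F$.
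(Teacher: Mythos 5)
Your proposal is correct, but note that the paper itself contains no proof of this statement: it is quoted as Proposition 3.3 of \cite{SatoHironaka}, and Section 3.1 only asserts that the proof given there for $F=\mathbb{Q}_p$ remains valid over any non-archimedean non-dyadic local field. Your computation is thus a sound, self-contained reconstruction of the argument that the paper delegates to its reference. The key identity $\mathrm{tr}(S_{\sigma,h,\varepsilon}\,{}^t\gamma T\gamma)=\sum_{m}v_m\pi^{\beta_m}S_{\sigma,h,\varepsilon}[{}^tg_m]$ does factor the integrand over the rows of $\gamma$; the set $\Gamma_0$ is exactly the product of your row domains $R_m$ (invertibility mod $\mathfrak{p}$ forces the diagonal entries into $\mathfrak{o}^\times$), carrying the restricted additive measure from the normalization $\int_{M_n(\mathfrak{o})}d\gamma=1$; the quadratic form splits over $\sigma$-orbits because $\varepsilon_i=1$ and $h_{\sigma(i)}=h_i$ on two-cycles; and your case analysis checks out in detail: the fixed-point rows yield the $I$, $I^*$ product with structural factor $q^{-(n-i)}$ and the condition $h_i\ge-\beta_i-1$, the rows $m=k$ and $m=l$ of a two-cycle $\{k,l\}$, $k<l$, yield $(1-q^{-1})^2$ together with the conditions $h_k\ge-\beta_k$ and $h_l\ge-\beta_l-1$ (matching the dichotomy $\sigma(i)>i$ versus $\sigma(i)\le i$ in the statement), the remaining rows give precisely the min-exponents of $d(\sigma,h,\beta)$ plus structural powers, and the structural total telescopes via $\sum_{i}(n-i)=n(n-1)/2$. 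In substance this is the same computation as in \cite{SatoHironaka}, so it is not a new route relative to the literature; what it buys over the paper's bare citation is that the extension to general non-dyadic $F$ becomes transparent, since one sees that the only field-dependent inputs are $|2|=1$, the order-zero character (so $\int_{\mathfrak{o}}\psi(cx)dx\in\{0,1\}$), and the evaluations of $I$ and $I^*$, which the paper re-derives over $F$ in Section 3.3 via the Weil constant.
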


\subsection{Calculation of the integral}
We now compute the integral
\begin{align*}
	S_t(B, s)^\omega=\int_{S_{n, t}(F)}
	\omega\left(\det Y_2\right)\left|\det Y_2\right|^{-s}\psi(-\mathrm{tr}(BX))dX
	\quad (0\le t\le n)
\end{align*}
by using the above theorems.

First, we divide the domain of integration by $\Gamma_0$ orbits.
Each representatives $S_{\sigma, h, \varepsilon}$ is in the element of $S_{n, t}(F)$
if and only if
\begin{align*}
	(*): \#\{i \mid h_i<0\}=n-t.
\end{align*}
Therefore
$$S_t(B, s)^\omega
=\sum_{\sigma, h, \varepsilon (*)}
\int_{\Gamma_0S_{\sigma, h, \varepsilon}}\omega(\det Y_2)|\det Y_2|^{-s}
\psi(-\mathrm{tr}(BX))dX.$$
\begin{lemma}
	The term $\omega(\det Y_2)|\det Y_2|^{-s}$ depends only on its $\Gamma_0$-orbit.
\end{lemma}
\begin{proof}
	Let $X'$ be a matrix with the same equivalence class as $X$, so we can write $X' = \gamma X{}^t\gamma$ for some $\gamma\in \Gamma_0$.
	We note that $\Gamma_0\subset\mathrm{GL}_n(\mathfrak{o})$.
	Therefore the two matrices $X$ and $X'$ are $\mathrm{GL}_n(\mathfrak{o})$-equivalent.
	
	Here, strictly speaking, from Lemma \ref{Jordan}, $\displaystyle \det Y_2=\prod_{v_i<0}\alpha_i$ is determined only modulo ${\mathfrak{o}^\times}^2$. 
	However, since $\omega$ is quadratic (i.e. $\omega^2=1$), we have $\omega(u^2)=1$ for all $u\in\mathfrak{o}^\times$, and moreover $|u^2|=1$. 
	Hence both $\omega(\det Y_2)$ and $|\det Y_2|$ are unchanged when $\det Y_2$ is multiplied by a square unit, so $\omega(\det Y_2)\,|\det Y_2|^{-s}$ depends only on the $\mathrm{GL}_n(\mathfrak{o})$-equivalence class, in particular, only on the $\Gamma_0$-orbit of $X$.
\end{proof}

Let $S_{\sigma, h, \varepsilon}$ satisfy the condition $(*)$.
Let
$$\{\mu_1, \mu_2, \cdots, \mu_{n-t}\}, \,\, \mu_1<\mu_2<\cdots<\mu_{n-t}$$
be the set of integers $\mu: 1\le \mu\le n$ such that $h_\mu<0$.
We define a square matrix $S_{\sigma, h, \varepsilon}^{(t)}$ of order $n-t$ defined by
\begin{align*}
	\left(S_{\sigma, h, \varepsilon}^{(t)}\right)_{ij}
	=\left(S_{\sigma, h, \varepsilon}\right)_{\mu_i, \,\mu_j}.
\end{align*}
Then we note that $Y_2$ and $S_{\sigma, h, \varepsilon}^{(t)}$ are $\mathrm{GL}_{n-t}(\mathfrak{o})$-equivalent.

We denote $c_{\sigma, h, \varepsilon,\omega}(s)
=\omega(\det S_{\sigma, h, \varepsilon}^{(t)})|\det S_{\sigma, h, \varepsilon}^{(t)}|^{-s}$.
Using the previous lemma,
$$S_t(B, s)^\omega
=\sum_{\sigma, h, \varepsilon (*)}c_{\sigma, h, \varepsilon,\omega}(s)
\int_{\Gamma_0S_{\sigma, h, \varepsilon}}\psi(-\mathrm{tr}(BX))dX.$$

Now using Theorem \ref{SH2} when $f(X)=\psi(-\mathrm{tr}(BX))$, we can deduce
\begin{align*}
	\int_{\Gamma_0S_{\sigma, h, \varepsilon}}\psi(-\mathrm{tr}(BX))dX
	&=\frac{1}{\alpha(\Gamma_0; S_{\sigma, h, \varepsilon})}
	\int_{\Gamma_0}\psi(-\mathrm{tr}(B\gamma S_{\sigma, h, \varepsilon}{}^t\gamma))d\gamma \\
	&=\frac{1}{\alpha(\Gamma_0; S_{\sigma, h, \varepsilon})}
	\int_{\Gamma_0}\psi(-\mathrm{tr}(S_{\sigma, h, \varepsilon} \cdot B[\gamma]))d\gamma \\
	&=\frac{\mathscr{G}_{\Gamma_0}(S_{\sigma, h, \varepsilon}, B)}{\alpha(\Gamma_0; S_{\sigma, h, \varepsilon})}.
\end{align*}
We can write the main theorem of this article:
\begin{theorem}\label{main}
	Let the notation is as above. Then we have
	$$S_t(B, s)^\omega=\sum_{\sigma, h, \varepsilon (*)}c_{\sigma, h, \varepsilon,\omega}(s)
	\frac{\mathscr{G}_{\Gamma_0}(S_{\sigma, h, \varepsilon}, B)}{\alpha(\Gamma_0; S_{\sigma, h, \varepsilon})}.$$
\end{theorem}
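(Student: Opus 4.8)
The plan is to assemble the theorem from the three structural inputs of the previous subsection together with the orbit-invariance lemma just established, starting from the reduced integral
$$S_0(B, s)^\omega=\int_{S_n(F)^0}\omega(\det X)|\det X|^{-s}\psi(-\mathrm{tr}(BX))\,dX$$
obtained above. First I would partition the domain into $\Gamma_0$-orbits. Since $S_n(F)^0$ is stable under $X\mapsto\gamma X\,{}^t\gamma$ for $\gamma\in\Gamma_0\subset\mathrm{GL}_n(\mathfrak{o})$, each orbit is either contained in $S_n(F)^0$ or disjoint from it, so by Theorem \ref{SH1} it suffices to determine which representatives $S_{\sigma, h, \varepsilon}$ lie in $S_n(F)^0$. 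As $S_{\sigma, h, \varepsilon}$ is, after applying the involution $\sigma$, block diagonal with $1\times1$ entries $\varepsilon_i\pi^{h_i}$ and $2\times2$ anti-diagonal blocks of entry $\pi^{h_i}$, its inverse has entries of valuation $-h_i$; hence $S_{\sigma, h, \varepsilon}^{-1}\in\mathfrak{p}\,\mathrm{Sym}_n(\mathfrak{o})$ holds precisely when $h_i\le-1$ for all $i$. Together with the fact that $S_n(F)$ exhausts $\mathrm{Sym}_n(F)$ up to a null set, this gives the decomposition
$$S_0(B, s)^\omega=\sum_{\substack{\sigma, h, \varepsilon \\ h_i\le-1}}\int_{\Gamma_0\cdot S_{\sigma, h, \varepsilon}}\omega(\det X)|\det X|^{-s}\psi(-\mathrm{tr}(BX))\,dX.$$

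Next I would invoke the lemma stating that $\omega(\det X)|\det X|^{-s}$ is constant along each $\Gamma_0$-orbit --- a consequence of $|\det\gamma|=1$ and $\omega^2=1$ --- to remove this factor from each orbit integral as the scalar $c_{\sigma, h, \varepsilon, \omega}(s)=\omega(\det S_{\sigma, h, \varepsilon})|\det S_{\sigma, h, \varepsilon}|^{-s}$. I would then apply the Sato--Hironaka integration formula of Theorem \ref{SH2} with $f(X)=\psi(-\mathrm{tr}(BX))$, obtaining
$$\int_{\Gamma_0\cdot S_{\sigma, h, \varepsilon}}\psi(-\mathrm{tr}(BX))\,dX=\alpha(\Gamma_0; S_{\sigma, h, \varepsilon})^{-1}\int_{\Gamma_0}\psi(-\mathrm{tr}(B\gamma S_{\sigma, h, \varepsilon}\,{}^t\gamma))\,d\gamma.$$
The cyclic invariance of the trace gives $\mathrm{tr}(B\gamma S_{\sigma, h, \varepsilon}\,{}^t\gamma)=\mathrm{tr}(S_{\sigma, h, \varepsilon}\,{}^t\gamma B\gamma)$, and the right-hand integral is by definition $\mathscr{G}_{\Gamma_0}(S_{\sigma, h, \varepsilon}, B)$. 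Substituting the scalar and this evaluation back into the orbit sum yields exactly the asserted identity.

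The only point demanding genuine care is the interchange of the infinite sum over orbit representatives with the integration, i.e. writing $\int_{S_n(F)^0}$ as $\sum_{\sigma, h, \varepsilon}\int_{\Gamma_0\cdot S_{\sigma, h, \varepsilon}}$ termwise. This is justified on the half-plane $\mathrm{Re}\,s>0$, where the Siegel series is absolutely convergent and the orbits partition $S_n(F)^0$ up to measure zero, so Fubini--Tonelli applies; the resulting identity of holomorphic functions then extends to all of $s\in\mathbb{C}$ by the meromorphic continuation already recorded for the Siegel series. Everything else is a direct substitution of Theorems \ref{SH1} and \ref{SH2} and the orbit-invariance lemma, so no further obstacle arises.
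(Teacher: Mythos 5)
Your proposal is correct and follows essentially the same route as the paper: reduce to the integral over $S_n(F)^0$, decompose into $\Gamma_0$-orbits via Theorem \ref{SH1} with the criterion $h_i\le-1$, pull out the orbit-constant factor $c_{\sigma,h,\varepsilon,\omega}(s)$ by the invariance lemma, and convert each orbit integral into $\mathscr{G}_{\Gamma_0}(S_{\sigma,h,\varepsilon},B)/\alpha(\Gamma_0;S_{\sigma,h,\varepsilon})$ by Theorem \ref{SH2} and cyclicity of the trace. Your extra details --- the explicit verification that the representatives in $S_n(F)^0$ are exactly those with $h_i\le-1$, and the Fubini--Tonelli justification of the sum--integral interchange (which the paper instead handles by noting the sum is effectively finite, since Theorem \ref{SH4} kills all but finitely many terms) --- only strengthen the argument.
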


Here we note that the sum for $\sigma, h, \varepsilon$ is finite.

\subsection{Weil constant}
We recall the definition of the Weil constant as in \cite{FEIkeda}.

For each Schwartz function $\phi\in\mathcal{S}(F)$,
the Fourier transform $\hat{\phi}$ is defined by
$$
\hat{\phi}(x)=\int_F \phi(y)\psi(xy)dy.$$
Note that the Haar measure $dy$ satisfying $\displaystyle\int_{\mathfrak{o}}dy=1$ is the 
self-dual Haar measure for the Fourier transform $\phi\mapsto\hat{\phi}$.

\begin{definition}
	Let $\psi$ be an additive character over $F$ of order $0$, i.e., $\psi|_{\mathfrak{o}}=1$, and let $a\in F^\times$.
	The Weil constant $\alpha_\psi(a)$ is a complex number satisfying
	$$\int_F\phi(x)\psi(ax^2)dx=
	\alpha_\psi(a)|2a|^{-\frac{1}{2}}\int_F\hat{\phi}(x)\psi\left(-\frac{x^2}{4a}\right)dx \eqno(1)$$
	for any $\phi\in\mathcal{S}(F)$.
\end{definition}

The following lemmas are fundamental.
Let $\langle *, *\rangle$ denote the Hilbert symbol of index 2 on $F$.
\begin{lemma} \label{Hilbert}
	For any $a, b\in F^\times$, 
	$$\frac{\alpha_\psi(a)\alpha_\psi(b)}{\alpha_\psi(ab)\alpha_\psi(1)}=\langle a, b\rangle.$$
\end{lemma}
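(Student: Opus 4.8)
The plan is to reduce everything to the behaviour of Gaussian integrals over $F$ and then to match the resulting $2$-cochain with the Hilbert symbol. First I would specialize the defining relation $(1)$ to identify the Weil constant with a regularized Gaussian integral. Taking in $(1)$ a sequence of test functions approximating $x\mapsto\psi(cx)$ (so that $\hat\phi$ concentrates at $-c$) yields
\[
\int_F\psi(ax^2+cx)\,dx=\alpha_\psi(a)\,|2a|^{-\frac12}\,\psi\!\left(-\frac{c^2}{4a}\right)
\]
for every $c\in F$, the integral being understood as the stabilizing limit of $\int_{\mathfrak p^{-N}}$, which converges since $\psi$ has order $0$ and $q$ is odd, so $|2|=1$. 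In particular $\int_F\psi(ax^2)\,dx=\alpha_\psi(a)|2a|^{-1/2}$. Two elementary consequences follow at once: the substitution $x\mapsto x/c$ gives the square-class invariance $\alpha_\psi(ac^2)=\alpha_\psi(a)$, and translation invariance of $dx$ gives $\int_F\psi(a(x-t)^2)\,dx=\int_F\psi(ax^2)\,dx$.

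The analytic heart is an addition formula. I would evaluate the two-variable Gaussian $\int_{F^2}\psi\!\left(ax^2+b(y-x)^2\right)dx\,dy$ in two ways. Integrating first in $y$ (a translation) factors it as $\alpha_\psi(a)\alpha_\psi(b)\,|4ab|^{-1/2}$; on the other hand, completing the square in $x$ diagonalizes the binary form as $(a+b)u^2+\frac{ab}{a+b}v^2$ by a determinant-one change of variables, giving $\alpha_\psi(a+b)\,\alpha_\psi\!\left(\frac{ab}{a+b}\right)$ times the matching absolute-value factor. Comparing yields, for $a+b\neq0$,
\[
\alpha_\psi(a)\,\alpha_\psi(b)=\alpha_\psi(a+b)\,\alpha_\psi\!\left(\frac{ab}{a+b}\right)=\alpha_\psi(a+b)\,\alpha_\psi\bigl(ab(a+b)\bigr),
\]
the last equality by square-class invariance.

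It remains to identify $c(a,b):=\dfrac{\alpha_\psi(a)\alpha_\psi(b)}{\alpha_\psi(ab)\alpha_\psi(1)}$ with $\langle a,b\rangle$. By the same diagonalization, the Weil constant of a nondegenerate diagonal form is multiplicative over orthogonal sums and invariant under $Q\mapsto{}^t\!PQP$, since the measure factors $|\det P|$ cancel. As $\mathrm{diag}(a,b)$ and $\mathrm{diag}(ab,1)$ share the discriminant $ab$ and differ only by the Hasse invariant $\langle a,b\rangle$, form-equivalence gives $c(a,b)=1=\langle a,b\rangle$ on every split pair. For the non-split pairs I would feed the addition formula back in: a short manipulation turns it into the recursion $c(a,b)=c(a+b,a+b)\,c(ab,a+b)$ (valid when $a+b\neq0$, using that each $c$-value is $\pm1$), which is exactly the recursion the Hilbert symbol obeys, since $\langle a,b\rangle=\langle a+b,ab(a+b)\rangle$ is its own Hasse-invariant identity. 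Both symbols being symmetric and square-class invariant, the remaining entries are then pinned down over the four square classes $\{1,u,\pi,u\pi\}$ of the non-dyadic field.

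The main obstacle is precisely this last identification in the anisotropic case. Equivalence-invariance is blind to it: the two inequivalent binary forms of a fixed discriminant differ exactly by the sign $\langle a,b\rangle=-1$, so that sign cannot be produced by any change of variables and must come from genuine harmonic-analytic input — either the addition formula above or one explicit Gauss-sum evaluation of $\alpha_\psi$ on a nonsquare representative. The technical point to watch throughout is the regularization and convergence of these oscillatory integrals over $F$; here the non-dyadic hypothesis that $q$ is odd (so $|2|=1$) keeps all the $|2a|^{-1/2}$ factors transparent and is what makes the bookkeeping match on the nose.
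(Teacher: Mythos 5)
First, a point of reference: the paper offers no proof of this lemma at all — it is quoted as a standard property of the Weil index (following Weil, via \cite{FEIkeda}) and is then \emph{used} to prove Lemma 3.4, i.e.\ $\alpha_\psi(\delta\pi)=-\alpha_\psi(\pi)$. So your proposal is an attempt at a self-contained proof, and its skeleton (the Gaussian-integral identity, square-class invariance, the addition formula $\alpha_\psi(a)\alpha_\psi(b)=\alpha_\psi(a+b)\alpha_\psi\bigl(ab(a+b)\bigr)$, equivalence-invariance, reduction to the four square classes $\{1,\delta,\pi,\delta\pi\}$) is indeed the classical route, and your treatment of the split pairs $\langle a,b\rangle=1$ is correct.

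However, the mechanism you propose for the non-split pairs — the only pairs carrying content — does not work. Over a non-archimedean field the addition formula degenerates exactly there: if $a$ lies in the class of $\delta$ and $b$ in the class of $\pi$, then $v(a)$ and $v(b)$ have different parities, so $v(a)\neq v(b)$, the ultrametric inequality forces $a+b$ into the square class of the dominant term, and the unordered pair of classes $\{[a+b],[ab(a+b)]\}$ coincides with $\{[a],[b]\}$: every instance of the addition formula is a tautology. Concretely, your recursion yields $c(\delta,\pi)=c(\delta,\delta)\,c(\delta\pi,\delta)=c(\delta\pi,\delta)$ (since $\delta+\pi\sim\delta$ and $c(\delta,\delta)=1$), and applying it again gives $c(\delta\pi,\delta)=c(\pi,\delta)$ — you cycle among the non-split pairs forever. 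The obstruction is structural: the constant function $\alpha\equiv1$ on $F^\times$ satisfies \emph{every} property your argument invokes (square-class invariance, $\alpha(1)=\alpha(\delta)=1$, the addition formula, equivalence-invariance, symmetry), yet its associated $c$ is identically $1$, not the Hilbert symbol. Hence no formal manipulation of these properties can prove the lemma. The input you relegate to an ``either/or'' aside is in fact mandatory: one must evaluate $\alpha_\psi$ on a ramified class, e.g.\ $\alpha_\psi(\pi)=q^{-1/2}\sum_{t\bmod\mathfrak p}\psi(t^2/\pi)$ together with the quadratic-residue sign flip $\sum_{t}\psi(\delta t^2/\pi)=-\sum_{t}\psi(t^2/\pi)$, giving $\alpha_\psi(\delta\pi)=-\alpha_\psi(\pi)$, i.e.\ $c(\delta,\pi)=-1$ (note this is precisely the paper's Lemma 3.4, run in the opposite logical direction). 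A secondary, repairable gap: you assume each $c$-value is $\pm1$ when rearranging the recursion, but this is not established a priori; it too follows only after the explicit evaluation.
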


Here we note that, for $a, b\in F^\times$, $\alpha_\psi(ab^2)=\alpha_\psi(a)$.
The next lemma states that the Gauss integral, $\displaystyle I(a)=\int_{\mathfrak{o}}\psi(ax^2)dx$, can be calculated by using the Weil constant, for a nonarchimedean, non-dyadic local field $F$.

\begin{lemma}
	Let $a\in F^\times$. If $\mathrm{ord}(a)<0$, then $I(a)=\alpha_\psi(a)|a|^{-\frac{1}{2}}$.
\end{lemma}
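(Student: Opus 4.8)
The plan is to specialize the defining functional equation (1) for the Weil constant to the single test function $\phi=\mathbf{1}_{\mathfrak{o}}$, the characteristic function of $\mathfrak{o}$. With this choice the left-hand side of (1) is exactly
\[
\int_F \mathbf{1}_{\mathfrak{o}}(x)\,\psi(ax^2)\,dx=\int_{\mathfrak{o}}\psi(ax^2)\,dx=I(a),
\]
so it suffices to evaluate the right-hand side of (1) and read off the identity. Everything then reduces to two elementary local computations: the Fourier transform of $\mathbf{1}_{\mathfrak{o}}$, and the Gauss integral appearing on the spectral side.

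First I would compute $\hat{\phi}$. Since $\psi$ has order $0$, i.e. conductor exactly $\mathfrak{o}$, the character $y\mapsto\psi(xy)$ is trivial on $\mathfrak{o}$ when $x\in\mathfrak{o}$ and nontrivial on $\mathfrak{o}$ when $x\notin\mathfrak{o}$; combined with the self-dual normalization $\int_{\mathfrak{o}}dy=1$, orthogonality of characters gives $\hat{\mathbf{1}}_{\mathfrak{o}}(x)=\int_{\mathfrak{o}}\psi(xy)\,dy=\mathbf{1}_{\mathfrak{o}}(x)$. Thus $\hat{\phi}=\mathbf{1}_{\mathfrak{o}}$, and the right-hand side of (1) becomes $\alpha_\psi(a)\,|2a|^{-1/2}\int_{\mathfrak{o}}\psi\!\left(-x^2/4a\right)dx$. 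Because $F$ is non-dyadic, $2$ and $4$ are units, so $|2a|=|a|$ and $|2a|^{-1/2}=|a|^{-1/2}$; moreover $\mathrm{ord}(-1/4a)=-\mathrm{ord}(a)>0$ by the hypothesis $\mathrm{ord}(a)<0$, so $-1/4a\in\mathfrak{p}\subset\mathfrak{o}$. Hence for every $x\in\mathfrak{o}$ the argument $-x^2/4a$ lies in $\mathfrak{o}$, where $\psi$ is trivial, so $\psi(-x^2/4a)=1$ and the integral collapses to $\mathrm{vol}(\mathfrak{o})=1$. Comparing the two sides of (1) then yields $I(a)=\alpha_\psi(a)\,|a|^{-1/2}$, as desired.

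The argument is short, and there is essentially no analytic difficulty once the test function is fixed; the only points that genuinely use the hypotheses are the verification that $\hat{\mathbf{1}}_{\mathfrak{o}}=\mathbf{1}_{\mathfrak{o}}$, which is precisely where the order-$0$ condition on $\psi$ and the self-duality of $dy$ enter, and the observation that the non-dyadic condition makes $2,4$ units so that the twisted variable $-1/4a$ has strictly positive valuation. This last fact is what forces the Gauss integral on the spectral side to degenerate to the volume of $\mathfrak{o}$, and it is the one place where the assumption $\mathrm{ord}(a)<0$ is indispensable: without it the spectral-side integral would itself be a nontrivial Gauss sum rather than $1$.
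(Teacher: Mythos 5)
Your proof is correct and takes essentially the same approach as the paper: both specialize the defining equation (1) to $\phi=\mathbf{1}_{\mathfrak{o}}$, use $\hat{\phi}=\mathbf{1}_{\mathfrak{o}}$ together with $|2|=1$ (non-dyadic), and observe that $\mathrm{ord}\left(-x^2/4a\right)\ge0$ makes the right-hand integral equal to $1$. The only difference is that you spell out the details (orthogonality for $\hat{\mathbf{1}}_{\mathfrak{o}}=\mathbf{1}_{\mathfrak{o}}$, the role of $\mathrm{ord}(a)<0$) that the paper treats as well-known.
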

\begin{proof}
	We assume $\phi=1_{\mathfrak{o}}$, which is the characteristic function of $\mathfrak{o}$.
	$\hat{\phi}=1_{\mathfrak{o}}$ is well known, so (1) gives
	$$\int_{\mathfrak{o}}\psi(ax^2)dx
	=\alpha_\psi(a)|2a|^{-\frac{1}{2}}\int_{\mathfrak{o}}\psi\left(-\frac{x^2}{4a}\right)dx.$$
	The integral $\displaystyle\int_{\mathfrak{o}}\psi\left(-\frac{x^2}{4a}\right)dx$ is 1 because
	$\mathrm{ord}\left(\displaystyle -\frac{x^2}{4a}\right)\ge0$.
	Since $F$ is non-dyadic, we have $|2|=1$.
	This completes the proof.
\end{proof}

\begin{lemma} \label{Weildelta}
	For $a\in F^\times$,
	$$\alpha_\psi(\delta a)=\begin{cases}
		\alpha_\psi(a) & (\mathrm{ord}\,a:\mathrm{even}) \\
		-\alpha_\psi(a) & (\mathrm{ord}\,a:\mathrm{odd}).
	\end{cases}$$
\end{lemma}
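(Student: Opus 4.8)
The plan is to apply Lemma~\ref{Hilbert} with $b=\delta$, which immediately gives
$$\alpha_\psi(\delta a)=\frac{\alpha_\psi(a)\,\alpha_\psi(\delta)}{\alpha_\psi(1)}\,\langle a,\delta\rangle ,$$
since $\langle a,\delta\rangle=\pm1$ equals its own inverse. Thus the whole statement reduces to two independent computations: the value of the Hilbert symbol $\langle a,\delta\rangle$, and the ratio $\alpha_\psi(\delta)/\alpha_\psi(1)$. I expect the former to be routine and the latter to be the crux (all Weil indices occurring are nonzero, so the divisions are legitimate).

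For the Hilbert symbol I would write $a=\pi^{m}u$ with $m=\mathrm{ord}(a)$ and $u\in\mathfrak{o}^\times$, and use bimultiplicativity to split $\langle a,\delta\rangle=\langle\pi,\delta\rangle^{m}\langle u,\delta\rangle$. Because $F$ is non-dyadic, the Hilbert symbol of any two units is trivial, so $\langle u,\delta\rangle=1$; and $\langle\pi,\delta\rangle$ equals the quadratic residue (Legendre) symbol $\left(\frac{\bar\delta}{\mathfrak{p}}\right)$, which is $-1$ precisely because $\delta$ is a non-square unit. Hence $\langle a,\delta\rangle=(-1)^{\mathrm{ord}(a)}$, which already produces the sign pattern of the statement once the ratio is shown to equal $1$.

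The remaining and essential point is $\alpha_\psi(\delta)=\alpha_\psi(1)$. Here Lemma~\ref{Hilbert} alone is not enough: taking $a=b=\delta$ yields only $\alpha_\psi(\delta)^2=\alpha_\psi(1)^2$ (using $\alpha_\psi(\delta^2)=\alpha_\psi(1)$ and $\langle\delta,\delta\rangle=\langle\delta,-1\rangle=1$), so the ratio is determined only up to sign. To pin the sign I would pass to an element of negative order, where the preceding lemma gives $\alpha_\psi(a)=I(a)|a|^{-1/2}$. For a unit $u$ I compute $I(\pi^{-1}u)$: the integrand $\psi(\pi^{-1}ux^2)$ depends only on $x\bmod\mathfrak{p}$, so $I(\pi^{-1}u)=q^{-1}\sum_{\bar x\in\mathfrak{k}}\bar\psi(\bar u\bar x^{2})$ is a quadratic Gauss sum over the residue field, where $\bar\psi$ is the additive character of $\mathfrak{k}$ induced by $\psi$. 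The standard evaluation gives $\sum_{\bar x}\bar\psi(\bar u\bar x^2)=\left(\frac{\bar u}{\mathfrak{p}}\right)\sum_{\bar x}\bar\psi(\bar x^2)$, and multiplying by $|\pi^{-1}u|^{1/2}=q^{1/2}$ yields
$$\alpha_\psi(\pi^{-1}u)=\left(\tfrac{\bar u}{\mathfrak{p}}\right)\alpha_\psi(\pi^{-1}),$$
so in particular $\alpha_\psi(\pi^{-1}\delta)=-\alpha_\psi(\pi^{-1})$.

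Finally I would feed this into Lemma~\ref{Hilbert} with $(a,b)=(\pi^{-1},\delta)$. Since $\langle\pi^{-1},\delta\rangle=\langle\pi,\delta\rangle=-1$, the relation $\alpha_\psi(\pi^{-1})\alpha_\psi(\delta)=\langle\pi^{-1},\delta\rangle\,\alpha_\psi(\pi^{-1}\delta)\,\alpha_\psi(1)$ becomes $\alpha_\psi(\pi^{-1})\alpha_\psi(\delta)=(-1)\bigl(-\alpha_\psi(\pi^{-1})\bigr)\alpha_\psi(1)$, whence $\alpha_\psi(\delta)=\alpha_\psi(1)$. Substituting this together with $\langle a,\delta\rangle=(-1)^{\mathrm{ord}(a)}$ into the first display completes the proof. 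The main obstacle is exactly this determination of the sign: it cannot be read off from the cocycle identity of Lemma~\ref{Hilbert}, and forces the explicit evaluation of the residue-field quadratic Gauss sum.
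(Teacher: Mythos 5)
Your argument is correct in all its steps, but it takes a genuinely different (and longer) route than the paper at exactly the point you identify as the crux. The paper's proof has the same skeleton as yours: it reduces the lemma, via the square-class invariance $\alpha_\psi(ab^2)=\alpha_\psi(a)$, to the two identities $\alpha_\psi(\delta)=\alpha_\psi(1)=1$ and $\alpha_\psi(\delta\pi)=-\alpha_\psi(\pi)$, and gets the latter from Lemma \ref{Hilbert} together with $\langle\delta,\pi\rangle=-1$; your computation $\langle a,\delta\rangle=(-1)^{\mathrm{ord}(a)}$ is just a more explicit version of that reduction to the four square classes $1,\delta,\pi,\delta\pi$. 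Where you part ways is the normalization $\alpha_\psi(\delta)=\alpha_\psi(1)$, which the paper dismisses as trivial. You are right that the cocycle identity of Lemma \ref{Hilbert} alone determines this ratio only up to sign, but your conclusion that pinning the sign \emph{forces} the residue-field Gauss sum evaluation is mistaken: since $\psi$ has order $0$ and $F$ is non-dyadic, evaluating the defining relation (1) at $\phi=1_{\mathfrak{o}}$ with a \emph{unit} argument $u$ makes both integrands $\psi(ux^2)$ and $\psi\left(-\frac{x^2}{4u}\right)$ identically $1$ on $\mathfrak{o}$, and $|2u|=1$, so $\alpha_\psi(u)=1$ for every unit $u$ — in particular $\alpha_\psi(\delta)=\alpha_\psi(1)=1$, with no Gauss sum at all. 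This is the same test-function computation the paper already uses to prove $I(a)=\alpha_\psi(a)|a|^{-1/2}$ for $\mathrm{ord}(a)<0$, just applied at order $0$. Your detour — the multiplicativity $\alpha_\psi(\pi^{-1}u)=\left(\frac{\bar u}{\mathfrak{p}}\right)\alpha_\psi(\pi^{-1})$ from the finite-field Gauss sum, fed back through Lemma \ref{Hilbert} at $(\pi^{-1},\delta)$ — is valid, and it even yields that more general multiplicativity statement as a byproduct, but it is strictly more work than the one-line evaluation and misdiagnoses where the actual content of the lemma lies. (One small slip: you quote the preceding lemma as $\alpha_\psi(a)=I(a)|a|^{-1/2}$; it should be $I(a)=\alpha_\psi(a)|a|^{-1/2}$, i.e.\ $\alpha_\psi(a)=I(a)|a|^{1/2}$, which is in fact the form you use when you multiply by $q^{1/2}$.)
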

\begin{proof}
	It is sufficient to show the following equations.
	\begin{align}
		\alpha_\psi(\delta)&=\alpha_\psi(1)=1. \tag{2}\\
		\alpha_\psi(\delta\pi)&=-\alpha_\psi(\pi). \tag{3}
	\end{align}
	
	(2) is trivial. To prove (3), we use Lemma \ref{Hilbert}. We have
	$$\frac{\alpha_\psi(\delta)\alpha_\psi(\pi)}{\alpha_\psi(\delta\pi)\alpha_\psi(1)}
	=\langle \delta, \pi \rangle.$$
	Because the fact that $\alpha_\psi(\delta)=\alpha_\psi(1)=1$ and $\langle \delta, \pi \rangle=-1$, the lemma is proved.
\end{proof}

\begin{remark}
	When $F$ is the $p$-adic number field $\mathbb{Q}_p$ ($p$: odd prime)
	and additive character $\psi$ is given by $\psi(x)=\exp(-2\pi x\sqrt{-1})$ ($x\in\mathbb{Z}[1/p]$),
	the value of $\alpha_\psi(p)$ is
	$$
	\alpha_\psi(p)=\begin{cases}
		1 & p\equiv1 \pmod{4} \\
		\sqrt{-1} & p\equiv 3 \pmod{4}.
	\end{cases}
	$$
\end{remark}

\section{An explicit formula for the Siegel series}
In this section, we obtain an explicit formula for
$$S_t(B, s)^\omega=\sum_{\sigma, h, \varepsilon (*)}c_{\sigma, h, \varepsilon, \omega}(s)
\frac{\mathscr{G}_\Gamma(S_{\sigma, h, \varepsilon}, B)}{\alpha(\Gamma; S_{\sigma, h, \varepsilon})}$$
with the notation in the previous section.
Also we put $B=\mathrm{diag}(v_1\pi^{e_1}, \cdots, v_n\pi^{e_n})$, where
$v_i\in\mathfrak{o}^\times$ and $0\le e_1\le e_2\le \cdots\le e_n$.
From lemma \ref{Jordan}, we assume $B$ as this type without loss of generality.

First, we assume that $\omega=\chi$.

We put
\begin{align*}
	e_{\sigma, i, u}&=\begin{cases}
		0 & (u\le i,\, u\le\sigma(i)) \\
		1 & (\sigma(i)<u\le i\,\,\, \text{or}\,\,\, i<u\le\sigma(i)) \\
		2 & (i<u,\, \sigma(i)<u),
	\end{cases} \\
	b_l(\sigma, B)&=\min\{
	\{e_i \mid i\in I_l, \sigma(i)>i\} \cup
	\{e_i+1 \mid i\in I_l, \sigma(i)\le i\}\}, \\
	B_i(\lambda)
	&=\{ k \mid 1\le k\le i-1,\,\, e_k+\lambda<0,\,\, e_k\not\equiv\lambda\bmod2\} \\
	&\quad\cup\, \{ k \mid i+1\le k\le n,\,\, e_k+\lambda+2<0,\,\, e_k\not\equiv\lambda\bmod2\}, \\
	\tilde{\rho}_{l, \lambda}(\sigma; B)
	&=\frac{1}{2}\sum_{i\in I_l}\sum_{u=1}^n
	\min\{e_u+e_{\sigma, i ,u}+\lambda,\, 0\}, \\
	\xi_{i, \lambda}(B)_\chi
	&=\prod_{k \in B_i(\lambda)}\chi(v_k)
	\times
	\begin{cases}
		0 & e_i+\lambda\ge0,\,\, \#B_i(\lambda) : \text{even} \\
		(1-q^{-1})\chi(-1)^{[\#B_i(\lambda)/2]+1} & e_i+\lambda\ge0,\,\, \#B_i(\lambda) : \text{odd} \\
		\chi(v_i)\chi(-1)^{[\#B_i(\lambda)/2]+1} & e_i+\lambda=-1,\,\, \#B_i(\lambda) : \text{even} \\
		-q^{-1/2}\chi(-1)^{[\#B_i(\lambda)/2]+1} & e_i+\lambda=-1,\,\, \#B_i(\lambda) : \text{odd}. 
	\end{cases}
\end{align*}
We now state the following theorem.

\begin{theorem}\label{mainthm}
	The Siegel series $S_t(B, s)^\chi$ associated with the function $f_t$ $(0\le t\le n)$ is given by:
	\begin{align*}
		S_t(B, s)^\chi
		&=\alpha_\psi(\pi)^{n-t}\sum_{\substack{\sigma\in\mathfrak{S}_n \\ \sigma^2=1}}
		(1-q^{-1})^{c_2(\sigma)}q^{-c_2(\sigma)}
		\sum_{\substack{I=I_0\cup\cdots\cup I_r \\ n^{(k)}=t}}q^{-\tau(\{I_i\})-t(\sigma, \{I_i\})}\frac{
			(1-q^{-1})^{\sum_{l=k}^rc_1^{(l)}(\sigma)}q^{n(k)}}
		{\prod_{l=k}^r(q^{n(l)}-1)} \\
		&\quad\times\sum_{\{\nu\}_k^t}\prod_{l=0}^{k-1}\chi(-1)^{\nu_l(n^{(l)}-n^{(k)})}
		q^{\nu_l((sn^{(l)}-n(l))-(sn^{(k)}-n(k)))+\tilde{\rho}_{l, \nu_0+\cdots+\nu_l}(\sigma; B)}
		\prod_{\substack{i\in I_l \\ \sigma(i)=i}}\xi_{i, \nu_0+\cdots+\nu_l}(B)_\chi.
	\end{align*}
	
	Here the summation with respect to $\{\nu\}_k^t$ for $k\ge1$ is taken over the finite set
	$$\left\{(\nu_0, \nu_1, \cdots, \nu_{k-1})\in \mathbb{Z}\times\mathbb{Z}_{>0}^{k-1} \relmid|
	-b_l(\sigma, B)\le\nu_0+\nu_1+\cdots+\nu_l\le-1\, (0\le l\le k-1)\right\}$$
	and we put
	$$c_1^{(l)}(\sigma)=\#\{i\in I_l \mid \sigma(i)=i\}.$$
	Moreover, in $\displaystyle \sum_{\substack{I=I_0\cup\cdots\cup I_r\\ n^{(k)}=t}}$
	we sum over all $\sigma$-stable partitions $I=I_0\cup\cdots\cup I_r$; 
	for each such partition, the index $k$ with $0\le k\le r$ and $n^{(k)}=t$ is uniquely determined.
\end{theorem}

\begin{proof}
	From Theorem \ref{main}, we consider
	$$S_t(B, s)^\chi=\sum_{\sigma, h, \varepsilon (*)}c_{\sigma, h, \varepsilon, \chi}(s)
	\frac{\mathscr{G}_{\Gamma_0}(S_{\sigma, h, \varepsilon}, B)}{\alpha(\Gamma_0; S_{\sigma, h, \varepsilon})},$$
	where
	\begin{align*}
		c_{\sigma, h, \varepsilon, \chi}(s)
		&=\chi(\det S_{\sigma, h, \varepsilon}^{(t)})|\det S_{\sigma, h, \varepsilon}^{(t)}|^{-s} \\
		&=\chi\left((-1)^{c_2^t(\sigma)}\prod_{h_i<0}\varepsilon_i\pi^{\sum_{h_j<0}h_j}\right)
		\left|(-1)^{c_2^t(\sigma)}\prod_{h_i<0}\varepsilon_i\pi^{\sum_{h_j<0}h_j}\right|^{-s} \\
		&=\chi(-1)^{c_2^t(\sigma)}\chi(\pi)^{\sum_{h_j<0} h_j}\prod_{h_i<0}\chi(\varepsilon_i)q^{s\sum_{h_j<0} h_j}
	\end{align*}
	since $\chi(\pi)=\chi(-1)$. Here we note that $\chi$ satisfies $\chi(-\pi)=\langle\pi, -\pi\rangle=1$, and we put $$c_2^t(\sigma)=\frac{1}{2}\#\{i\in I \mid \sigma(i)\neq i,\, h_i<0\}.$$
	
	So we have
	\begin{align*}
		S_t(B, s)^\chi
		&=\int_{S_{n, t}(F)}f_t\left(
		\begin{pmatrix}
			0 & -1 \\ 1 & X
		\end{pmatrix}\right)
		\psi(-\mathrm{tr}(BX))dX \\
		&=\sum_{\sigma, h, \varepsilon (*)}
		\int_{\Gamma_0S_{\sigma, h, \varepsilon}}
		\chi\left(\det Y_2^{-1}\right)\left|\det Y_2^{-1}\right|^{-s}\psi(-\mathrm{tr}(BX))dX \\
		&=\sum_{\sigma, h, \varepsilon (*)}
		\chi(-1)^{c_2^t(\sigma)}\chi(\pi)^{\sum_{h_j<0} h_j}\prod_{e_i<0}\chi(\varepsilon_i)q^{s\sum_{h_j<0} h_j}
		\frac{\mathscr{G}_{\Gamma_0}(B, S_{\sigma, h, \varepsilon})}{\alpha(\Gamma_0; S_{\sigma, h, \varepsilon})}.
	\end{align*}
	
	By using Theorem \ref{SH3} and Theorem \ref{SH4}, we have
	\begin{align*}
		S_t(B, s)^\chi&=\sum_{\sigma, h, \varepsilon (*)}
		\chi(-1)^{c_2^t(\sigma)}\chi(-1)^{\sum_{h_j<0} h_j}\prod_{h_i<0}\chi(\varepsilon_i)q^{s\sum_{h_j<0} h_j}
		(1-q^{-1})^{2c_2(\sigma)}q^{-\frac{n(n-1)}{2}+d(\sigma, h, e)} \\
		&\quad\times\prod_{\substack{i=1 \\ \sigma(i)=i}}^n
		\left\{I^*(-\varepsilon_iv_i\pi^{h_i+e_i})\prod_{k=1}^{i-1}I(-\varepsilon_iv_k\pi^{h_i+e_k})
		\prod_{k=i+1}^nI(-\varepsilon_iv_k\pi^{h_i+e_k+2})\right\} \\
		&\quad\times 2^{-c_1(\sigma)}(1-q^{-1})^{-c_2(\sigma)}
		q^{\frac{n(n-1)}{2}-\tau(\{I_i\})-t(\sigma, \{I_i\})-c_2(\sigma)-\sum_{l=0}^r\nu_ln(l)},
	\end{align*}
	where the summation with respect to
	$(\sigma, h, \varepsilon)$ is taken over all $(\sigma, h, \varepsilon)\in\Lambda_n$
	satisfying the conditions $(*)$ and (T) in Theorem \ref{SH4}.
	
	We replace the sum with respect to $(\sigma, h)$ with $(\sigma, I, \{\nu\})$.
	In order to do so, we rewrite the terms $\sum_{h_j<0}h_j$ and $d(\sigma, h, e)$.
	The term $\sum_{h_j<0}h_j$ is
	\begin{align*}
		\sum_{h_j<0}h_j
		&=n_0\nu_0+n_1(\nu_0+\nu_1)+\cdots+n_{k-1}(\nu_0+\nu_1+\cdots+\nu_{k-1}) \\
		&=\sum_{l=0}^{k-1}\nu_l(n_l+\cdots+n_{k-1}) \\
		&=\sum_{l=0}^{k-1}\nu_l(n^{(l)}-n^{(k)}).
	\end{align*}
	Also the term $d(\sigma, h, e)$ is
	\begin{align*}
		d(\sigma, h, e)
		&=\sum_{\substack{i=1 \\ \sigma(i)>i}}^n \Biggl\{
		\sum_{u=1}^{i-1}\min\{h_i+e_u, 0\}+\sum_{u=i+1}^{\sigma(i)-1}\min\{h_i+e_u+1, 0\}
		+\sum_{u=\sigma(i)+1}^n\min\{h_i+e_u+2, 0\}\Biggl\} \\
		&=\sum_{\substack{i=1 \\ \sigma(i)>i}}^n\sum_{u\neq i, \sigma(i)}
		\min\{h_i+e_u+e_{\sigma, i, u}, 0\} \\
		&=\sum_{l=k}^r\sum_{u\neq i, \sigma(i)}
		\min\{\lambda_l+e_u+e_{\sigma, i, u}, 0\}=: \tilde{d}(\sigma, \lambda, e).
	\end{align*}
	Then it follows that
	\begin{align*}
		S_t(B, s)^\chi&=\sum_{\substack{\sigma\in\mathfrak{S}_n \\ \sigma^2=1}}
		\chi(-1)^{c_2^t(\sigma)}2^{-c_1(\sigma)}(1-q^{-1})^{c_2(\sigma)}q^{-c_2(\sigma)}
		\sum_{I=I_0\cup\cdots\cup I_r}q^{-\tau(\{I_i\})-t(\sigma, \{I_i\})} \\
		&\quad\times\sum_{\nu_0, \cdots, \nu_r(**)}\chi(-1)^{\sum_{l=0}^{k-1}\nu_l(n^{(l)}-n^{(k)})}
		q^{\tilde{d}(\sigma, \lambda, e)+\sum_{l=0}^{k-1}\nu_l(sn^{(l)}-sn^{(k)}-n(l))} \\
		&\quad\times\sum_\varepsilon\chi(\varepsilon_i)'\prod_{l=0}^r\prod_{\substack{i\in I_l \\ \sigma(i)=i}}
		\left\{I^*(-\varepsilon_iv_i\pi^{\lambda_l+e_i})\prod_{k=1}^{i-1}I(-\varepsilon_iv_k\pi^{\lambda_l+e_k})
		\prod_{k=i+1}^nI(-\varepsilon_iv_k\pi^{\lambda_l+e_k+2})\right\},
	\end{align*}
	where $\lambda_l=\nu_0+\nu_1+\cdots+\nu_l$\,\,($l=0, 1, \cdots, r$).
	The summation with respect to
	$\nu_0, \nu_1, \cdots, \nu_r$ is taken over all $(\nu_0, \nu_1, \cdots, \nu_r)$ satisfying
	$$
	(**) \quad \nu_1, \cdots, \nu_r\ge1,\,\,\, -b_l(\sigma, B)\le\lambda_l\le-1 \quad (l=0, 1, \cdots, r).$$
	Here we also define $\chi(\varepsilon_i)'$ as
	$$\chi(\varepsilon_i)'=
	\begin{cases}
		\chi(\varepsilon_i) & (\lambda_i<0) \\
		1 & (\lambda_i\ge0).
	\end{cases}$$
	Put
	$$Q_{l, \lambda_l}(\sigma; B)
	=\prod_{\substack{i\in I_l \\ \sigma(i)=i}}q_{i, \lambda_l}(\sigma; B),$$
	where
	$$q_{i, \lambda_l}(\sigma; B)
	=\sum_{\varepsilon=1, \delta}\left\{\chi(\varepsilon)
	I^*(-\varepsilon v_i\pi^{e_i+\lambda_l})
	\prod_{k=1}^{i-1}I(-\varepsilon v_k\pi^{e_k+\lambda_l})
	\prod_{k=i+1}^nI(-\varepsilon v_k\pi^{e_k+\lambda_l+2})\right\}.$$
	We note that, when $\sigma(i)\neq i$, $\varepsilon_i$ must be $1$, so
	$\prod_i\chi(\varepsilon_i)=\prod_{\sigma(i)=i}\chi(\varepsilon_i).$
	
	We conclude
	\begin{align*}
		S_t(B, s)^\chi&=\sum_{\substack{\sigma\in\mathfrak{S}_n \\ \sigma^2=1}}
		\chi(-1)^{c_2^t(\sigma)}2^{-c_1(\sigma)}(1-q^{-1})^{c_2(\sigma)}q^{-c_2(\sigma)}
		\sum_{I=I_0\cup\cdots\cup I_r}q^{-\tau(\{I_i\})-t(\sigma, \{I_i\})} \\
		&\quad\times\sum_{\nu_0, \cdots, \nu_r (**)'}\chi(-1)^{\sum_{l=0}^{k-1}\nu_l(n^{(l)}-n^{(k)})}
		q^{\tilde{d}(\sigma, \lambda, e)+\sum_{l=0}^{k-1}\nu_l(sn^{(l)}-sn^{(k)}-n(l))}
		\prod_{l=0}^r\prod_{\substack{i\in I_l \\ \sigma(i)=i}}q'_{i, \lambda_l}(\sigma; B) \tag{4}
	\end{align*}
	where $\{\nu_i\}$ takes
	$$ (**)' \quad \nu_1, \cdots, \nu_r\ge1,\,\,\, -b_l(\sigma, B)\le\lambda_l,\quad
	\sum_{\substack{l \\ \lambda_l<0}}n_l=n-t.$$
	We define
	$$q'_{i, \lambda_l}(\sigma; B)
	=\sum_{\varepsilon=1, \delta}\left\{\chi(\varepsilon)'
	I^*(-\varepsilon v_i\pi^{e_i+\lambda_l})
	\prod_{k=1}^{i-1}I(-\varepsilon v_k\pi^{e_k+\lambda_l})\prod_{k=i+1}^nI(-\varepsilon v_k\pi^{e_k+\lambda_l+2})\right\}$$
	and since $i\in I_l$ we get  
	$$q'_{i, \lambda_l}(\sigma; B)=
	\begin{cases}
		q_{i, \lambda_l}(\sigma; B) & (\lambda_l<0) \\
		2(1-q^{-1}) & (\lambda_l\ge0).
	\end{cases}$$
	Here we recall that, for $v\in\mathfrak{o}^\times$ and $k\in\mathbb{Z}$,
	$$I(v\pi^k)=
	\begin{cases}
		1 & k\ge0 \\
		\alpha_\psi(v\pi^k)q^{k/2} & k<0,
	\end{cases} \quad
	I^*(v\pi^k)=I(v\pi^k)-\frac{1}{q}I(v\pi^{k+2}).
	$$

	We divide the summation with respect to $(\nu_0, \cdots, \nu_r)$ as follows:
	$$
	\sum_{\nu_0, \cdots, \nu_r (**)'}
	=\sum_{k=0}^{r+1}
	\sum_{\substack{\nu_0+\cdots+\nu_{k-1}<0 \\ \nu_0+\cdots+\nu_k\ge0}}.
	$$
	Note here that, if $\nu_0+\cdots+\nu_k\ge0$, then $\nu_0+\cdots+\nu_l\ge0$ for any $l\ge k$,
	since $\nu_1,\cdots,\nu_r\ge1$.
		
	So we have
	\begin{align*}
		\sum_{\substack{\nu_0+\cdots+\nu_{k-1}<0 \\ \nu_0+\cdots+\nu_k\ge0}}
		&\chi(-1)^{\sum_{l=0}^{k-1}\nu_l(n^{(l)}-n^{(k)})}
		q^{\sum_{l=0}^{k-1}\nu_l(sn^{(l)}-sn^{(k)}-n(l))}
		\prod_{l=0}^r\prod_{\substack{i\in I_l \\ \sigma(i)=i}}q'_{i, \lambda_l}(\sigma; B) \\
		&=\sum_{\nu_0, \cdots, \nu_{k-1}}\chi(-1)^{\sum_{l=0}^{k-1}\nu_l(n^{(l)}-n^{(k)})}
		q^{\sum_{l=0}^{k-1}\nu_l(sn^{(l)}-sn^{(k)}-n(l))}
		\prod_{l=0}^{k-1}\prod_{\substack{i\in I_l \\ \sigma(i)=i}}q'_{i, \lambda_l}(\sigma; B) \\
		&\quad\times\sum_{\nu_k=-(\nu_0+\cdots+\nu_{k-1})}^\infty
		\sum_{\nu_{k+1}=1}^\infty\cdots\sum_{\nu_r=1}^\infty
		q^{-\sum_{l=k}^r\nu_ln(l)}\{2(1-q^{-1})\}^{\sum_{l=k}^rc_1^{(l)}(\sigma)} \\
		&=\frac{\{2(1-q^{-1})\}^{\sum_{l=k}^rc_1^{(l)}(\sigma)}q^{n(k)}}
		{\prod_{l=k}^r(q^{n(l)}-1)} \\
		&\quad\times\sum_{\nu_0,\cdots,\nu_{k-1}}\chi(-1)^{\sum_{l=0}^{k-1}\nu_l(n^{(l)}-n^{(k)})}
		q^{\sum_{l=0}^{k-1}\nu_l((sn^{(l)}-n(l))-(sn^{(k)}-n(k)))}
		\prod_{l=0}^{k-1}\prod_{\substack{i\in I_l \\ \sigma(i)=i}}q'_{i, \lambda_l}(\sigma; B).\tag{5}
	\end{align*}
	It is easy to be checked that,
	when $\lambda_l\ge0$,
	$$\prod_{\substack{i\in I_l \\ \sigma(i)=i}}q'_{i, \lambda_l}(\sigma; B)=\{2(1-q^{-1})\}^{c_1^{(l)}(\sigma)}$$
	where we put
	$$c_1^{(k)}(\sigma)=\#\{i\in I_l \mid \sigma(i)=i\}.$$
	
	\begin{lemma} \label{q}
		When we put
		$$r_i'=\frac{1}{2}\sum_{k=1}^{i-1}\min\{e_k+\lambda_l, 0\}
		+\frac{1}{2}\sum_{k=i+1}^n\min\{e_k+\lambda_l+2, 0\},$$
		we have
		$$q_{i, \lambda_l}(\sigma, B)
		=2\alpha_\psi(\pi)q^{r_i'+\frac{1}{2}\min\{e_i+\lambda_l, 0\}}\xi_{i, \lambda_l}(B)_\chi.$$
	\end{lemma}
	
	\begin{proof}
		
		Now we use the fact that, for $v\in\mathfrak{o}^\times$ and $k\in\mathbb{Z}$,
		$$I(v\pi^k)=
		\begin{cases}
			1 & k\ge0 \\
			\alpha_\psi(v\pi^k)q^{k/2} & k<0,
		\end{cases}
		$$
		so it becomes
		$$q_{i, \lambda_l}(\sigma; B)
		=q^{r_i'}\sum_{\varepsilon=1, \delta}\left\{\chi(\varepsilon)
		I^*(-\varepsilon v_i\pi^{e_i+\lambda_l})
		\prod_{\substack{k=1 \\ e_k+\lambda_l<0}}^{i-1}
		\alpha_\psi(-\varepsilon v_k\pi^{e_k+\lambda_l})
		\prod_{\substack{k=i+1 \\ e_k+\lambda_l+2<0}}^n
		\alpha_\psi(-\varepsilon v_k\pi^{e_k+\lambda_l+2})\right\}.$$
		
		We recall
		$$I^*(v\pi^k)=
		I(v\pi^k)-\frac{1}{q}I(v\pi^{k+2})=
		\begin{cases}
			1-q^{-1} & (k\ge0) \\
			\alpha_\psi(v\pi^{-1})q^{-1/2}-q^{-1} & (k=-1) \\
			0 & (k\le-2).
		\end{cases}$$
		Hence
		\begin{align*}
			q_{i, \lambda_l}(\sigma; B)
			&=q^{r_i'}\sum_{\varepsilon=1, \delta}\left\{\chi(\varepsilon)
			\prod_{\substack{k=1 \\ e_k+\lambda_l<0}}^{i-1}
			\alpha_\psi(-\varepsilon v_k\pi^{e_k+\lambda_l})
			\prod_{\substack{k=i+1 \\ e_k+\lambda_l+2<0}}^n
			\alpha_\psi(-\varepsilon v_k\pi^{e_k+\lambda_l+2})\right\} \\
			&\quad\times
			\begin{cases}
				1-q^{-1} & (e_i+\lambda_l\ge0) \\
				\alpha_\psi(-\varepsilon v_i\pi^{-1})q^{-1/2}-q^{-1} & (e_i+\lambda_l=-1). \\
			\end{cases}
		\end{align*}
		
		To calculate $q_{i, \lambda_l}(\sigma; B)$, we use following lemmas.
		\begin{lemma}\label{chisum}
			For $a_k\in F^\times(1\le k\le r)$, we define
			$A:=\{ k \mid \mathrm{ord}\,a_k:\mathrm{odd}\}$. In this situation, 
			$$
			\sum_{\varepsilon=1, \delta}
			\chi(\varepsilon)\prod_{k=1}^r\alpha_\psi(\varepsilon a_k)
			=
			\begin{cases}
				\displaystyle2\prod_{k=1}^r\alpha_\psi(a_k) & \#A:\mathrm{odd} \\
				0 & \#A:\mathrm{even}.
			\end{cases}$$
		\end{lemma}
		
		\begin{proof}
			From Lemma \ref{Weildelta}, we have $\displaystyle \alpha_\psi(\delta a)=(-1)^{\mathrm{ord} a}\alpha_\psi(a)$.
			Hence
			\begin{align*}
				\sum_{\varepsilon=1, \delta}\chi(\varepsilon)\prod_{k=1}^r\alpha_\psi(\varepsilon a_k)
				&=\prod_{k=1}^r\alpha_\psi(a_k)+\chi(\delta)\prod_{k=1}^r\alpha_\psi(\delta a_k) \\
				&=\prod_{k=1}^r\alpha_\psi(a_k)-\prod_{k=1}^r(-1)^{\mathrm{ord} a_k}\alpha_\psi(a_k) \\
				&=\left(1-\prod_{k=1}^r(-1)^{\mathrm{ord} a_k}\right)\prod_{k=1}^r\alpha_\psi(a_k).
			\end{align*}
			Since the term $1-\prod_{k=1}^r(-1)^{\mathrm{ord} a_k}$ is 2 and 0 when $\#A$ is odd and even, respectively,
			we can prove this lemma.
		\end{proof}
		
		\begin{lemma}\label{chi}
			Let $a, b\in\pi\mathfrak{o}^\times$. Then
			$\alpha_\psi(a)\alpha_\psi(b)=\chi(-ab)$.
		\end{lemma}
		\begin{proof}
			From the Lemma \ref{Hilbert},
			\begin{align*}
				\frac{\alpha_\psi(a)\alpha_\psi(b)}{\alpha_\psi(1)\alpha_\psi(ab)}=\langle a, b\rangle.
			\end{align*}
			Now the denominator of the left side $\alpha_\psi(1)\alpha_\psi(ab)=1\cdot1=1$ and the
			right side of the Hilbert symbol is $\chi(-ab)$, so the assertion is proved.
		\end{proof}
		First, we assume $e_i+\lambda_l\ge0$.
		By Lemma \ref{chisum}, we may assume that the order of the set
		\begin{align*}
			B_i(\lambda_l)
			&=\{ k \mid 1\le k\le i-1,\,\, e_k+\lambda_l<0,\,\, e_k\not\equiv\lambda_l\bmod2\} \\
			&\quad\cup \{ k \mid i+1\le k\le n,\,\, e_k+\lambda_l+2<0,\,\, e_k\not\equiv\lambda_l\bmod2\}
		\end{align*}
		is odd. If we fix an element $k_0\in B_i(\lambda_l)$,
		then $\#B_i(\lambda_l)\backslash\{k_0\}$ is even and by Lemma \ref{chi}, we give
		
		\begin{align*}
			q_{i, \lambda_l}(\sigma; B)
			&=(1-q^{-1})q^{r_i'}\sum_{\varepsilon=1, \delta}\left\{\chi(\varepsilon)
			\prod_{\substack{k=1 \\ e_k+\lambda_l<0}}^{i-1}
			\alpha_\psi(-\varepsilon v_k\pi^{e_k+\lambda_l})
			\prod_{\substack{k=i+1 \\ e_k+\lambda_l+2<0}}^n
			\alpha_\psi(-\varepsilon v_k\pi^{e_k+\lambda_l+2})\right\} \\
			&=2(1-q^{-1})q^{r_i'}\prod_{k\in B_i(\lambda_l)}
			\alpha_\psi(-v_k\pi^{e_k+\lambda_l}) \\
			&=2(1-q^{-1})q^{r_i'}\prod_{k\in B_i(\lambda_l)\backslash\{k_0\}}
			\alpha_\psi(-v_k\pi^{e_k+\lambda_l})\cdot\alpha_\psi(-v_{k_0}\pi^{e_{k_0}+\lambda_l}) \\
			&=2(1-q^{-1})q^{r_i'}\chi(-1)^{[\#B_i(\lambda_l)/2]}\prod_{k\in B_i(\lambda_l)\backslash\{k_0\}}
			\chi(v_k)\alpha_\psi(-v_{k_0}\pi^{e_{k_0}+\lambda_l}) \\
			&=2(1-q^{-1})q^{r_i'}\chi(-1)^{[\#B_i(\lambda_l)/2]+1}\alpha_\psi(\pi)\prod_{k\in B_i(\lambda_l)}\chi(v_k).
		\end{align*}
		We use $\alpha_\psi(-v_{k_0}\pi^{e_{k_0}+\lambda_l})=\chi(-v_{k_0})\alpha_\psi(\pi)$,
		since $e_{k_0}+\lambda_l$ is an odd number.
		
		When $e_i+\lambda_l=-1$, we can similarly calculate as
		
		\begin{align*}
			&q_{i, \lambda_l}(\sigma; B) \\
			&=\begin{cases}
				\displaystyle
				2q^{r_i'-\frac{1}{2}}\chi(v_i)\chi(-1)^{[\#B_i(\lambda_l)/2]+1}\alpha_\psi(\pi)\prod_{k\in B_i(\lambda_l)}\chi(v_k)
				& \#B_i(\lambda_l): \text{even} \\
				\displaystyle
				-2q^{r_i'-1}\chi(-1)^{[\#B_i(\lambda_l)/2]+1}\alpha_\psi(\pi)\prod_{k\in B_i(\lambda_l)}\chi(v_k)
				& \#B_i(\lambda_l): \text{odd}. \\
			\end{cases}
		\end{align*}
		
		Therefore,
		$$q_{i, \lambda_l}(\sigma, B)
		=2\alpha_\psi(\pi)q^{r_i'+\frac{1}{2}\min\{e_i+\lambda_l, 0\}}\xi_{i, \lambda_l}(B)_\chi$$
		and the lemma holds.
	\end{proof}
	
	From (4), (5) and Lemma 4.1, It follows that
	\begin{align*}
		S_t(B, s)^\chi
		&=\sum_{\substack{\sigma\in\mathfrak{S}_n \\ \sigma^2=1}}
		\chi(-1)^{c_2^t(\sigma)}2^{-c_1(\sigma)}(1-q^{-1})^{c_2(\sigma)}q^{-c_2(\sigma)}
		\sum_{\substack{I=I_0\cup\cdots\cup I_r \\ n^{(k)}=t}}q^{-\tau(\{I_i\})-t(\sigma, \{I_i\})} \\
		&\quad\times\frac{\{2(1-q^{-1})\}^{\sum_{l=k}^rc_1^{(l)}(\sigma)}q^{n(k)}}
		{\prod_{l=k}^r(q^{n(l)}-1)} \\
		&\quad\times\sum_{\nu_0,\cdots,\nu_{k-1}}\chi(-1)^{\sum_{l=0}^{k-1}\nu_l(n^{(l)}-n^{(k)})}
		q^{d(\sigma, e, \beta)+\sum_{l=0}^{k-1}\nu_l((sn^{(l)}-n(l))-(sn^{(k)}-n(k)))} \\
		&\quad\times\prod_{l=0}^{k-1}\prod_{\substack{i\in I_l \\ \sigma(i)=i}}
		2\alpha_\psi(\pi)q^{r_i'+\frac{1}{2}\min\{\beta_i+\lambda_l, 0\}}\xi_{i, \lambda_l}(B)_\chi,
	\end{align*}
	so we have
	\begin{align*}
		S_t(B, s)^\chi
		&=\alpha_\psi(\pi)^{n-t}\sum_{\substack{\sigma\in\mathfrak{S}_n \\ \sigma^2=1}}
		(1-q^{-1})^{c_2(\sigma)}q^{-c_2(\sigma)}
		\sum_{\substack{I=I_0\cup\cdots\cup I_r \\ n^{(k)}=t}}q^{-\tau(\{I_i\})-t(\sigma, \{I_i\})}\frac{
			(1-q^{-1})^{\sum_{l=k}^rc_1^{(l)}(\sigma)}q^{n(k)}}
		{\prod_{l=k}^r(q^{n(l)}-1)} \\
		&\quad\times\sum_{\{\nu\}_k^t}\prod_{l=0}^{k-1}\chi(-1)^{\nu_l(n^{(l)}-n^{(k)})}
		q^{\nu_l((sn^{(l)}-n(l))-(sn^{(k)}-n(k)))+\tilde{\rho}_{l, \nu_0+\cdots+\nu_l}(\sigma; B)}
		\prod_{\substack{i\in I_l \\ \sigma(i)=i}}\xi_{i, \nu_0+\cdots+\nu_l}(B)_\chi
	\end{align*}
	To show the main theorem, we recall
	\begin{align*}
		&\tilde{d}(\sigma, \lambda, e)+\sum_{l=0}^{k-1}\sum_{\substack{i\in I_l \\ \sigma(i)=i}}\left(
		r_i'+\frac{1}{2}\min\{e_i+\lambda_l, 0\}\right) \\
		&=\frac{1}{2}\sum_{l=k}^r\sum_{\substack{i\in I_l \\ \sigma(i)\neq i}}
		\sum_{u=1}^n\min\{e_u+\lambda_l+e_{\sigma, i, u}, 0\}
		+\frac{1}{2}\sum_{l=0}^{k-1}\sum_{i\in I_l}
		\sum_{u=1}^n\min\{e_u+\lambda_l+e_{\sigma, i, u}, 0\} \\
		&=\sum_{l=0}^{k-1}\tilde{\rho}_{l, \lambda_l}(\sigma; B).
	\end{align*}
\end{proof}

\subsection{The case $\varphi=f_n$} 

From the definition of the Siegel series, we have
$$S_n(B, s)=\int_{\mathrm{Sym}_n(F)}f_n\left(w_n\begin{pmatrix}
	1 & X \\ 0 &1
\end{pmatrix}\right)\psi(-\mathrm{tr}(BX))dX=1.$$
On the other hand, from the main theorem we get
\begin{align*}
	S_n(B, s)^\chi
	=\sum_{\substack{\sigma\in\mathfrak{S}_n \\ \sigma^2=1}}
	(1-q^{-1})^{n-c_2(\sigma)}q^{-c_2(\sigma)}
	\sum_{I=I_0\cup\cdots\cup I_r}q^{-\tau(\{I_i\})-t(\sigma, \{I_i\})}
	\times\frac{q^{\frac{n(n+1)}{2}}}{\prod_{l=0}^r(q^{n(l)}-1)}.
\end{align*}

It follows that there is a nontrivial equation: 
\begin{proposition}
	When we use the notation as above, we have 
	\begin{align*}
		\sum_{\substack{\sigma\in\mathfrak{S}_n \\ \sigma^2=1}}
		(1-q^{-1})^{n-c_2(\sigma)}q^{-c_2(\sigma)}
		\sum_{I=I_0\cup\cdots\cup I_r}q^{-\tau(\{I_i\})-t(\sigma, \{I_i\})}
		\times\frac{q^{\frac{n(n+1)}{2}}}{\prod_{l=0}^r(q^{n(l)}-1)}=1.
	\end{align*}
\end{proposition}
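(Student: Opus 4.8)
The plan is to evaluate $S_n(B,s)^\chi$ in two independent ways and equate the results; the asserted identity is precisely the equality of these two evaluations. The first evaluation is the direct one from the Definition, and the second is the specialization of the formula for $S_t(B,s)^\chi$ (the theorem of Section 5) to $t=n$.

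For the direct computation I would first observe that $w_n\begin{pmatrix}1 & X \\ 0 & 1\end{pmatrix}=\begin{pmatrix}0 & -1 \\ 1 & X\end{pmatrix}$, and that for $X\in\mathrm{Sym}_n(\mathfrak{o})$ this matrix already lies in $K$ with lower-left block $1_n$, whose reduction modulo $\mathfrak{p}$ has rank $n$; hence it lies in $\Gamma w_n\Gamma$. Writing it as $w_n\begin{pmatrix}1 & X \\ 0 & 1\end{pmatrix}$ with $\begin{pmatrix}1 & X \\ 0 & 1\end{pmatrix}\in\Gamma$ and invoking the right $\Gamma$-equivariance of $f_n$ gives $f_n=1$ there, while for $X\notin\mathrm{Sym}_n(\mathfrak{o})$ the Jordan--Iwasawa analysis of Section 5 (the condition $(*)$ with $t=n$, that is, no negative valuations) forces $f_n=0$. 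Since $B\in\mathrm{Sym}_n(\mathfrak{o})$ (all $e_i\ge0$) and $\psi$ has order $0$, the phase $\psi(-\mathrm{tr}(BX))$ is trivial on $\mathrm{Sym}_n(\mathfrak{o})$, so the integral collapses to $\int_{\mathrm{Sym}_n(\mathfrak{o})}dX=1$ by the chosen normalization of the Haar measure.

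For the second evaluation I would set $t=n$ in the formula for $S_t(B,s)^\chi$. The key simplification is that $n^{(0)}=n$ always holds, so the constraint $n^{(k)}=t=n$ forces $k=0$ for every partition $\{I_i\}$; consequently the tuple $(\nu_0,\dots,\nu_{k-1})$ is empty, the sum over $\{\nu\}_k^t$ and the product $\prod_{l=0}^{k-1}$ both reduce to the empty (unit) term, and $\alpha_\psi(\pi)^{n-t}=1$. It then remains to record $n(0)=\tfrac{n(n+1)}{2}$ and $\sum_{l=0}^r c_1^{(l)}(\sigma)=c_1(\sigma)$, and to combine the two powers of $(1-q^{-1})$: since $c_1(\sigma)+2c_2(\sigma)=n$, one has $(1-q^{-1})^{c_2(\sigma)}\,(1-q^{-1})^{c_1(\sigma)}=(1-q^{-1})^{n-c_2(\sigma)}$, which turns the specialized formula into exactly the left-hand side of the Proposition.

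Equating the two evaluations proves the identity. There is no genuine analytic obstacle, as both computations rest on results already established; the only point demanding care is the bookkeeping in the specialization $t=n$, in particular the correct application of the empty-product and empty-sum conventions when $k=0$ and the verification that the exponent of $(1-q^{-1})$ consolidates to $n-c_2(\sigma)$ via $c_1(\sigma)=n-2c_2(\sigma)$.
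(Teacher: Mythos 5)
Your proposal is correct and follows the paper's own route exactly: the paper likewise obtains the identity by computing $S_n(B,s)^\chi$ once directly from the definition (getting $1$) and once by specializing the $S_t(B,s)^\chi$ formula to $t=n$, then equating the two. Your write-up merely supplies details the paper leaves implicit (the rank-$n$ coset argument, the forcing of $k=0$, and the consolidation $(1-q^{-1})^{c_1(\sigma)+c_2(\sigma)}=(1-q^{-1})^{n-c_2(\sigma)}$), all of which are sound.
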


\subsection{The case $\omega=\mathbf{1}$}
\begin{theorem}
	When we define
	\begin{align*}
		\xi_{i, \lambda}(B)_1
		&=\prod_{k \in B_i(\lambda)}\chi(v_k) \times
		\begin{cases}
			0 & e_i+\lambda\ge0,\,\, \#B_i(\lambda) : \text{odd} \\
			(1-q^{-1})\chi(-1)^{[\#B_i(\lambda)/2]} & e_i+\lambda\ge0,\,\, \#B_i(\lambda) : \text{even} \\
			\chi(v_i)\chi(-1)^{[\#B_i(\lambda)/2]+1} & e_i+\lambda=-1,\,\, \#B_i(\lambda) : \text{odd} \\
			-q^{-1/2}\chi(-1)^{[\#B_i(\lambda)/2]} & e_i+\lambda=-1,\,\, \#B_i(\lambda) : \text{even},
		\end{cases}
	\end{align*}
	the Siegel series
	\begin{align*}
		S_0(B, s)^{\mathbf{1}}&=\int_{\mathrm{Sym}_n(F)}f_0\left(w_n
		\begin{pmatrix}
			1 & X \\ 0 & 1
		\end{pmatrix}
		\right)\psi(-\mathrm{tr}(BX))dX
	\end{align*}
	where $f_0 \in I_n\left(\mathbf{1}, s-\frac{n+1}{2}\right)^{\Gamma, \mathbf{1}}$
	is equal to
	\begin{align*}
		S_0(B, s)^{\mathbf{1}}
		&=\sum_{\substack{\sigma\in\mathfrak{S}_n \\ \sigma^2=1}}
		(1-q^{-1})^{c_2(\sigma)}
		\sum_{I=I_0\cup\cdots\cup I_r}q^{-c_2(\sigma)-\tau(\{I_i\})-t(\sigma, \{I_i\})} \\
		&\quad\times\sum_{\{\nu\}}\prod_{l=0}^r
		q^{\frac{1}{2}\nu_ln^{(l)}(2s-1-n^{(l)})+\tilde{\rho}_{l, \nu_0+\cdots+\nu_l}(\sigma; B)}
		\prod_{\substack{i\in I_l \\ \sigma(i)=i}}\xi_{i, \nu_0+\cdots+\nu_l}(B)_1.
	\end{align*}
	
	The index $\{\nu\}$ runs through the finite set
	$$
	\{(\nu_0, \nu_1, \cdots, \nu_r)\in\mathbb{Z}\times\mathbb{Z}_{>0}^r\mid
	-b_l(\sigma, B)\le \nu_0+\nu_1+\cdots+\nu_l\le-1, (0\le l\le r)\}.$$
\end{theorem}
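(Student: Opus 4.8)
The plan is to run the argument of Theorem~\ref{general} essentially verbatim, tracking only the places where the character $\chi$ entered and replacing them by their values for $\omega=\mathbf{1}$. Starting from Theorem~\ref{main} with $\omega=\mathbf{1}$, the determinant twist collapses: since $\mathbf{1}\equiv1$ we have
$$c_{\sigma,h,\varepsilon,\mathbf{1}}(s)=|\det S_{\sigma,h,\varepsilon}|^{-s}=q^{s\sum_j h_j},$$
so the factors $\chi(\mathrm{sgn}\,\sigma)$, $\chi(-1)^{\sum_j h_j}$ and $\prod_i\chi(\varepsilon_i)$ that appeared in the $\chi$-case all disappear. The orbit decomposition, the volume formula of Theorem~\ref{SH3}, and the character sum of Theorem~\ref{SH4} are independent of $\omega$, so after substituting them I would again be reduced to evaluating, for each fixed point $i$ of $\sigma$, the local factor
$$q_{i,\lambda_l}^{\mathbf{1}}(\sigma;B)=\sum_{\varepsilon=1,\delta}\left\{I^*(-\varepsilon v_i\pi^{e_i+\lambda_l})\prod_{k=1}^{i-1}I(-\varepsilon v_k\pi^{e_k+\lambda_l})\prod_{k=i+1}^{n}I(-\varepsilon v_k\pi^{e_k+\lambda_l+2})\right\},$$
now \emph{without} the weight $\chi(\varepsilon)$. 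The range of the $\{\nu\}$-summation, $-b_l(\sigma,B)\le\nu_0+\cdots+\nu_l\le-1$, comes from the condition $h_i\le-1$ together with the non-vanishing criterion of Theorem~\ref{SH4}, neither of which involves $\omega$, so it is unchanged.

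The key computational input is the $\omega=\mathbf{1}$ analogue of the summation lemma used before. Writing $A$ for the set of indices at which the argument has odd order and using $\alpha_\psi(\delta a)=(-1)^{\mathrm{ord}\,a}\alpha_\psi(a)$ from Lemma~\ref{Weildelta}, I would obtain
$$\sum_{\varepsilon=1,\delta}\prod_{k}\alpha_\psi(\varepsilon a_k)=\Bigl(1+\prod_k(-1)^{\mathrm{ord}\,a_k}\Bigr)\prod_k\alpha_\psi(a_k),$$
which equals $2\prod_k\alpha_\psi(a_k)$ when $\#A$ is \emph{even} and $0$ when $\#A$ is \emph{odd} --- exactly the opposite parity to the $\chi$-case. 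This parity reversal is precisely what turns $\xi_{i,\lambda}(B)_\chi$ into $\xi_{i,\lambda}(B)_1$: the vanishing now occurs for $\#B_i(\lambda)$ odd, and since an \emph{even} number of odd-order factors pairs up completely under Lemma~\ref{chi} (each pair $\alpha_\psi(a)\alpha_\psi(b)=\chi(-ab)$ contributing one $\chi(-1)$), no unpaired $\alpha_\psi(\pi)$ survives and the exponent of $\chi(-1)$ drops from $[\#B_i(\lambda)/2]+1$ to $[\#B_i(\lambda)/2]$. The hard part will be the two subcases $e_i+\lambda_l=-1$, where the factor $I^*(-\varepsilon v_i\pi^{-1})=\alpha_\psi(-\varepsilon v_i\pi^{-1})q^{-1/2}-q^{-1}$ itself carries an odd-order argument: I would fold this extra $\alpha_\psi(-\varepsilon v_i\pi^{-1})$ into the pairing count, so that the parity governing $\xi_1$ becomes that of $\#B_i(\lambda)+1$, and carefully track the resulting $q^{-1/2}$, the factor $\chi(v_i)$, and the shift in the $\chi(-1)$-exponent. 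I expect this bookkeeping to reproduce all four branches of $\xi_{i,\lambda}(B)_1$, with each fixed point contributing $2\,q^{r_i'+\frac12\min\{e_i+\lambda_l,0\}}\xi_{i,\lambda_l}(B)_1$.

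Finally I would reassemble exactly as in Theorem~\ref{general}. The identity
$$d(\sigma,h,e)+\sum_{l=0}^{r}\sum_{\substack{i\in I_l\\ \sigma(i)=i}}\Bigl(r_i'+\tfrac12\min\{e_i+\lambda_l,0\}\Bigr)=\sum_{l=0}^{r}\tilde{\rho}_{l,\lambda_l}(\sigma;B)$$
involves only the $\min$-functions and is independent of $\omega$, so it carries over unchanged and produces the exponent $\frac12\nu_l n^{(l)}(2s-1-n^{(l)})+\tilde{\rho}_{l,\nu_0+\cdots+\nu_l}(\sigma;B)$. The factor $2^{c_1(\sigma)}$ contributed by the $c_1(\sigma)$ fixed points again cancels the $2^{-c_1(\sigma)}$ coming from $\alpha(\Gamma_0;S_{\sigma,h,\varepsilon})^{-1}$. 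The two cosmetic differences from the $\chi$-case are both consequences of the dropped characters: since there is now no $\chi(\mathrm{sgn}\,\sigma)=\chi(-1)^{c_2(\sigma)}$ and no surviving $\alpha_\psi(\pi)$, the relation $\alpha_\psi(\pi)^2=\chi(-1)$ is never invoked, so the prefactor $\alpha_\psi(\pi)^{c_1(\sigma)}\chi(-1)^{c_2(\sigma)}=\alpha_\psi(\pi)^{n}$ of the $\chi$-case is simply absent; and since $\chi(-1)^{\sum_j h_j}=\chi(-1)^{\sum_l\nu_l n^{(l)}}$ has vanished, the weights $\chi(-1)^{\nu_l n^{(l)}}$ drop out of the $\nu$-product. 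Collecting the terms then yields the stated formula for $S_0(B,s)^{\mathbf{1}}$.
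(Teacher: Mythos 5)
Your proposal is correct and takes essentially the same route as the paper: re-run the proof of Theorem \ref{general} with $\omega=\mathbf{1}$, note that the determinant twist and all $\chi(\varepsilon)$, $\chi(\mathrm{sgn}\,\sigma)$, $\chi(-1)^{\sum_j h_j}$ factors collapse, and recompute only the unweighted $\varepsilon$-sum, where the parity reversal in the summation lemma (nonvanishing for $\#B_i(\lambda)$ even rather than odd, complete pairing via $\alpha_\psi(a)\alpha_\psi(b)=\chi(-ab)$, hence no surviving $\alpha_\psi(\pi)$) yields exactly the four branches of $\xi_{i,\lambda}(B)_1$ and the disappearance of the prefactor $\alpha_\psi(\pi)^n$ and of the weights $\chi(-1)^{\nu_l n^{(l)}}$. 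The paper's proof is precisely this argument, stated more tersely.
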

\begin{proof}
	An explicit formula arises similar to the proof of theorem \ref{mainthm} except for $\xi_{i, \lambda}(B)_1$.
	We check this term.
	
	Since
	\begin{align*}
		&\sum_{\varepsilon=1, \delta}\left\{
		I^*(-\varepsilon v_i\pi^{e_i+\lambda_l})
		\prod_{\substack{k=1 \\ e_k+\lambda_l<0}}^{i-1}
		\alpha_\psi(-\varepsilon v_k\pi^{e_k+\lambda_l})
		\prod_{\substack{k=i+1 \\ e_k+\lambda_l+2<0}}^n
		\alpha_\psi(-\varepsilon v_k\pi^{e_k+\lambda_l+2})\right\} \\
		&=\sum_{\varepsilon=1, \delta}\left\{
		\prod_{\substack{k=1 \\ e_k+\lambda_l<0}}^{i-1}
		\alpha_\psi(-\varepsilon v_k\pi^{e_k+\lambda_l})
		\prod_{\substack{k=i+1 \\ e_k+\lambda_l+2<0}}^n
		\alpha_\psi(-\varepsilon v_k\pi^{e_k+\lambda_l+2})\right\} \\
		&\quad\times
		\begin{cases}
			1-q^{-1} & (e_i+\lambda_l\ge0) \\
			\alpha_\psi(-\varepsilon v_i\pi^{-1})q^{-1/2}-q^{-1} & (e_i+\lambda_l=-1)
		\end{cases} \\
		&=2q^{\frac{1}{2}\min\{\beta_i+\lambda_l, 0\}}\prod_{k \in B_i(\lambda)}\chi(v_k) \\
		&\quad\times
		\begin{cases}
			0 & e_i+\lambda_l\ge0,\,\, \#B_i(\lambda_l) : \text{odd} \\
			(1-q^{-1})\chi(-1)^{[\#B_i(\lambda)/2]} & e_i+\lambda_l\ge0,\,\, \#B_i(\lambda_l) : \text{even} \\
			\chi(v_i)\chi(-1)^{[\#B_i(\lambda)/2]+1} & e_i+\lambda_l=-1,\,\, \#B_i(\lambda_l) : \text{odd} \\
			-q^{-1/2}\chi(-1)^{[\#B_i(\lambda)/2]} & e_i+\lambda_l=-1,\,\, \#B_i(\lambda_l) : \text{even}
		\end{cases}
	\end{align*}
	the statement of the theorem holds.
\end{proof}

\section{Values of the Siegel series for $n=1, 2, 3$}
\subsection{The case $n=1$}
When $n=1$, $\sigma=\mathrm{id}$, the identity permutation, and $I_0=\{1\}$.
Thus we obtain the following theorem.
\begin{theorem}
	Let $B=(\alpha_1\pi^{e_1})\in\mathrm{Sym}_1(F)$ $(\alpha_1\in\mathfrak{o}^\times, e_1\ge0)$.
	Then the Siegel series $S_0(B, s)^\chi$ is
	$$S_0(B, s)^\chi=\alpha_\psi(\pi)\chi(-1)^{e_1}\chi(\alpha_1)q^{(1-s)(e_1+1)-\frac{1}{2}}.$$
\end{theorem}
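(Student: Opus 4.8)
The plan is to derive this formula as the specialization of Theorem \ref{general} to $n=1$, where almost every combinatorial ingredient trivializes. First I would record this trivialization: for $n=1$ the only involution is $\sigma=\mathrm{id}$, the only partition of $I=\{1\}$ is $I_0=\{1\}$ (so $r=0$), and hence $c_2(\sigma)=0$, $\tau(\{I_i\})=0$, $t(\sigma,\{I_i\})=0$, $n^{(0)}=1$, $n(0)=1$. Since $\sigma(1)=1\le1$ one has $e_{\sigma,1,1}=0$ and $b_0(\sigma,B)=e_1+1$, so the multi-index $\{\nu\}$ is just a single integer $\nu_0$ ranging over $-(e_1+1)\le\nu_0\le-1$. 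The prefactors $(1-q^{-1})^{c_2(\sigma)}$ and $q^{-c_2(\sigma)-\tau-t}$ are both $1$, so the formula collapses to $S_0(B,s)^\chi=\alpha_\psi(\pi)\sum_{\nu_0}\chi(-1)^{\nu_0}q^{\nu_0(s-1)+\tilde\rho_{0,\nu_0}(\mathrm{id};B)}\,\xi_{1,\nu_0}(B)_\chi$, using $\tfrac12 n^{(0)}(2s-1-n^{(0)})=s-1$.

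Next I would evaluate the two $B$-dependent factors. The set $B_1(\lambda)$ is empty for every $\lambda$, because the two index ranges $1\le k\le0$ and $2\le k\le1$ that define it are both vacuous when $n=1$; hence $\#B_1(\lambda)=0$ is even and the product $\prod_{k\in B_1(\lambda)}\chi(v_k)$ equals $1$. Inspecting the four branches of $\xi_{1,\nu_0}(B)_\chi$, only the two ``even'' branches can occur, giving $\xi_{1,\nu_0}(B)_\chi=0$ when $e_1+\nu_0\ge0$ and $\xi_{1,\nu_0}(B)_\chi=\chi(\alpha_1)\chi(-1)$ (with $v_1=\alpha_1$ and $[\,0/2\,]+1=1$) precisely when $e_1+\nu_0=-1$. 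Since $\nu_0\ge-(e_1+1)$, the value $e_1+\nu_0=-1$ occurs exactly at $\nu_0=-(e_1+1)$, so the whole sum collapses to this single term. Alongside this I would compute $\tilde\rho_{0,\nu_0}(\mathrm{id};B)=\tfrac12\min\{e_1+\nu_0,0\}$, which equals $-\tfrac12$ at $\nu_0=-(e_1+1)$.

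Finally I would substitute $\nu_0=-(e_1+1)$ into the collapsed formula and simplify. The $q$-exponent becomes $\nu_0(s-1)-\tfrac12=(e_1+1)(1-s)-\tfrac12$, and the character factor becomes $\chi(-1)^{\nu_0}\cdot\chi(\alpha_1)\chi(-1)=\chi(-1)^{e_1+2}\chi(\alpha_1)=\chi(-1)^{e_1}\chi(\alpha_1)$, where I use $\chi(-1)^2=1$ to reduce both $\chi(-1)^{\nu_0}=\chi(-1)^{e_1+1}$ and the extra $\chi(-1)$. Combining these with the leading $\alpha_\psi(\pi)$ reproduces $\alpha_\psi(\pi)\chi(-1)^{e_1}\chi(\alpha_1)q^{(1-s)(e_1+1)-\frac12}$. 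The only delicate bookkeeping is the parity arithmetic with $\chi(-1)$ and making sure one lands in the correct (``$e_i+\lambda=-1$, even'') branch of $\xi$; every other quantity vanishes or is constant, so there is no genuine obstacle beyond this verification.
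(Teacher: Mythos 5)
Your proposal is correct and takes essentially the same route as the paper's own proof: specialize Theorem \ref{general} to $n=1$ (so $\sigma=\mathrm{id}$, $I_0=\{1\}$, and $\nu_0$ ranges over $-e_1-1\le\nu_0\le-1$), observe that $B_1(\nu_0)=\emptyset$ forces $\xi_{1,\nu_0}(B)_\chi=0$ whenever $e_1+\nu_0\ge0$ so that only the term $\nu_0=-e_1-1$ survives, and then evaluate $\tilde{\rho}_{0,-e_1-1}=-\tfrac{1}{2}$ and $\xi_{1,-e_1-1}(B)_\chi=\chi(-\alpha_1)=\chi(-1)\chi(\alpha_1)$. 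The only difference is that you make the bookkeeping (the value $b_0(\sigma,B)=e_1+1$ and the parity reduction $\chi(-1)^{e_1+2}=\chi(-1)^{e_1}$) explicit where the paper leaves it implicit.
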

\begin{proof}
	When $n=1$, $\sigma=\text{id}$ and the partition $\{I_i\}$ is $I_0=\{1\}$, so
	\begin{align*}
		S_0(B, s)^\chi&=\alpha_\psi(\pi)\sum_{\nu_0=-e_1-1}^{-1}
		\chi(-1)^{\nu_0}q^{\frac{1}{2}\nu_0(2s-1-1)+\tilde{\rho}_{0, \nu_0}(\sigma; B)}\xi_{1, \nu_0}(B)_\chi \\
		&=\alpha_\psi(\pi)\chi(-1)^{-e_1-1}q^{(1-s)(e_1+1)+\tilde{\rho}_{0, -e_1-1}(\sigma; B)}\xi_{1, -e_1-1}(B)_\chi,
	\end{align*}
	since $B_1(\nu_0)=\emptyset$ so $\xi_{1,\nu_0}(B)_\chi=0$ ($\nu_0\ge-e_1$).
	We follow this theorem by $\tilde{\rho}_{0, -e_1-1}(\sigma; B)=-\frac{1}{2}$ and  $\xi_{1, -e_1-1}(B)_\chi=\chi(-\alpha_1)$.
\end{proof}

\subsection{The case $n=2$}

We calculate the explicit value for the Siegel series $S(B, s)$ when 
$B=\mathrm{diag}(\alpha_1\pi^{e_1}, \alpha_2\pi^{e_2})$
($0\le e_1\le e_2,\, \alpha_1, \alpha_2\in\mathfrak{o}^\times$).

\begin{theorem}[\cite{degree2} Proposition 3.1] \label{degree2}
	In above situation, the Siegel series is calculated as follows.
	\begin{itemize}
		\item $e_1+e_2$ is even
		\begin{align*}
			S_0(B, s)^\chi&=\chi(-1)\left\{
			(q-1)\sum_{i=1}^{\frac{1}{2}(e_1+e_2)}q^{(3-2s)i-2}-q^{(3-2s)(\frac{e_1+e_2}{2}+1)-2}\right\} \\
			S_1(B, s)^\chi&=\alpha_\psi(\pi)\chi(-1)^{e_1+1}\chi(-\alpha)q^{(\frac{3}{2}-s)(e_1+1)+\frac{e_1}{2}}
			\frac{(1-q^{2-2s})(1-q^{(\frac{3}{2}-s)(e_2-e_1)})}{1-q^{3-2s}}
		\end{align*}
		\item $e_1+e_2$ is odd
		\begin{align*}
			S_0(B, s)^\chi&=\chi(-1)\left\{
			(q-1)\sum_{i=1}^{\frac{1}{2}(e_1+e_2+1)}q^{(3-2s)i-2}+\chi(\pi\alpha_1\alpha_2)q^{(3-2s)(\frac{e_1+e_2}{2}+1)-\frac{3}{2}}\right\} \\
			S_1(B, s)^\chi&=\alpha_\psi(\pi)\chi(-1)^{e_1+1}\chi(-\alpha)q^{(\frac{3}{2}-s)(e_1+1)+\frac{e_1}{2}}
			\frac{1+\chi(-\alpha\beta)q^{1-s}}{1-q^{3-2s}} \\
			&\quad\times\left\{1-\chi(-\alpha\beta)q^{1-s}+\chi(-\alpha\beta)q^{(\frac{3}{2}-s)(e_2-e_1)-\frac{1}{2}}
			-q^{(\frac{3}{2}-s)(e_2-e_1)}\right\}
		\end{align*}
	\end{itemize}
\end{theorem}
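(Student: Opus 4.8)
The plan is to derive both formulas by specializing the two master results already proved---Theorem \ref{general} for $S_0(B,s)^\chi$ and the explicit formula for $S_t(B,s)^\chi$ of the previous section---to $n=2$, and then to evaluate the resulting finite sums by hand. For $n=2$ there are exactly two involutions in $\mathfrak{S}_2$, the identity and the transposition $\sigma=(1\,2)$, so the outer sum has very few terms. The $\sigma$-stable ordered partitions of $I=\{1,2\}$ are: for $\sigma=\mathrm{id}$, the single block $\{1,2\}$ and the two ordered two-block partitions $(\{1\},\{2\})$ and $(\{2\},\{1\})$; for $\sigma=(1\,2)$, only the single block $\{1,2\}$, since a $\sigma$-stable set containing $1$ must contain $2$. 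At this point I would record, once and for all, the combinatorial constants ($c_2$, $c_1^{(l)}$, $\tau$, $t$, $n^{(l)}$, $n(l)$, $b_l$) for each pair, and use $\alpha_\psi(\pi)^2=\chi(-\pi^2)=\chi(-1)$ (a consequence of Lemma \ref{chi}) to turn the global factor $\alpha_\psi(\pi)^{n}=\alpha_\psi(\pi)^2$ into the $\chi(-1)$ that prefixes the stated $S_0$.

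For $S_0$ the key observation is that many of these pairs drop out because their $\xi_{i,\lambda}(B)_\chi$ factors vanish on the admissible $\nu$-range. For $n=2$ the sets $B_i(\lambda)$ are easy to describe: $B_1(\lambda)=\{2\}$ exactly when $e_2+\lambda+2<0$ and $e_2\not\equiv\lambda\pmod 2$, and $B_2(\lambda)=\{1\}$ exactly when $e_1+\lambda<0$ and $e_1\not\equiv\lambda\pmod 2$, and otherwise both are empty. Running these conditions against the definition of $\xi$ shows that a fixed point $i$ contributes only when $e_i+\lambda$ is either $-1$ or nonnegative with $\#B_i(\lambda)$ odd, which pins the admissible values of $\nu_0$ (and, in the two-block case, forces $\lambda_1=-e_1-1$ in the surviving ordering). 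What remains is a principal geometric series in one $\nu$-variable, coming from the transposition block together with the surviving identity two-block partition, plus boundary corrections from the identity single-block term; collecting them produces the sum $\sum_i q^{(3-2s)i-2}$ and the correction terms $-q^{\,(3-2s)(\cdots)}$ and $\chi(\pi\alpha_1\alpha_2)q^{\,(3-2s)(\cdots)}$ of the two parity cases.

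For $S_1$ I would set $t=1$ in the $S_t$-formula. The constraint $n^{(k)}=t=1$ selects only those partitions whose non-negative part $I_k\cup\cdots\cup I_r$ has a single element, i.e. $k=r$ with $I_r$ a singleton; for $n=2$ these are precisely the two identity two-block partitions. A direct computation shows that the prefactor $\dfrac{(1-q^{-1})^{\sum_{l=k}^r c_1^{(l)}(\sigma)}q^{n(k)}}{\prod_{l=k}^r(q^{n(l)}-1)}$ collapses to $1$, after which only the single inner sum over $\nu_0$ survives, with $q$-exponent governed by $(sn^{(0)}-n(0))-(sn^{(1)}-n(1))=s-2$ together with $\tilde\rho_{0,\nu_0}$. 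Summing this geometric series and combining the two orderings yields the quotient $\dfrac{(1-q^{2-2s})(1-q^{(3/2-s)(e_2-e_1)})}{1-q^{3-2s}}$ in the even case and its odd analogue, while the surviving single Weil constant supplies the front factor $\alpha_\psi(\pi)=\alpha_\psi(\pi)^{\,n-t}$.

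The main obstacle will be the parity bookkeeping hidden in $\xi_{i,\lambda}(B)_\chi$. Because the value of $\xi$ splits four ways according to whether $e_i+\lambda\ge 0$ or $e_i+\lambda=-1$ and whether $\#B_i(\lambda)$ is even or odd, and because the sign $\chi(-1)^{[\#B_i(\lambda)/2]+1}$ and the characters $\chi(v_k)$, $\chi(v_i)$ must be tracked, one has to determine, for each $\nu$ in the summation range, which branch is active and how $\#B_i(\lambda)$ depends on the parities of $e_1,e_2,\nu_0$. It is exactly this dependence that forces the split into $e_1+e_2$ even and $e_1+e_2$ odd and produces the half-integer exponents $3/2-s$ and the character values $\chi(\alpha_1\alpha_2)$, $\chi(-\alpha)$ in the final formulas; once these branches are sorted out, assembling the geometric sums into the stated closed forms is routine.
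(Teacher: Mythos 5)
Your proposal is correct and follows essentially the same route as the paper: the paper's own proof consists precisely of specializing the general formulas (Theorem \ref{general} for $S_0$ and the $S_t$-theorem for $S_1$) to $n=2$ and summing over the same four pairs $(\sigma,\{I_i\})$ you enumerate, namely $\sigma=\mathrm{id}$ with $I_0=\{1,2\}$, with $(I_0,I_1)=(\{1\},\{2\})$, with $(I_0,I_1)=(\{2\},\{1\})$, and $\sigma=(1\,2)$ with $I_0=\{1,2\}$. Your additional observations --- that $\alpha_\psi(\pi)^2=\chi(-1)$ via Lemma \ref{chi}, that the constraint $n^{(k)}=1$ restricts $S_1$ to the two identity two-block partitions with prefactor collapsing to $1$ and exponent $s-2$, and that the parity analysis of $B_i(\lambda)$ drives the even/odd split --- are all accurate refinements of the computation the paper leaves implicit.
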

This statement holds since we calculate the sum of 4 cases below as well as when $n=1$.
\begin{enumerate}[(1)]
	\setlength{\leftskip}{2.0cm}
	\item $\sigma=\text{id}$, $I_0=\{1\}, I_1=\{2\}$
	\item $\sigma=\text{id}$, $I_0=\{1, 2\}$
	\item $\sigma=\text{id}$, $I_0=\{2\}, I_1=\{1\}$
	\item $\sigma=(1 2)$, $I_0=\{1, 2\}.$
\end{enumerate}

\subsection{The case $n=3$}

In this section, we give the explicit value of the Siegel series for $n=3$.
We put $B=\mathrm{diag}(\alpha_1\pi^{e_1}, \alpha_2\pi^{e_2}, \alpha_3\pi^{e_3})$,
where $\alpha_i\in\mathfrak{o}^\times$, and $e_1\le e_2\le e_3$.
In this case, we can calculate the value of the Siegel series in the same way where $n$ is 1 and 2.

\begin{theorem}
	For $(0\le d\le 3)$, the value of the Siegel series $S_{3, d}(B, s)^\chi$ is calculated as follows.
	Here we note $\mathrm{Wh}_{3, d}(B, s)^\chi = S_{3, d}(B, s+2)^\chi$.
	\begin{itemize}
		\item $d=0$ and $e_1+e_2$ is even
		\begin{align*}
			\mathrm{Wh}_{3, 0}(B, s)^\chi
			&=\alpha_\psi(\pi)^3\chi(-1)^{e_1}q^{-(e_1+3)s-\frac{5}{2}}(1-q^{-2s-1})^{-1}(1-q^{-2s+1})^{-1} \\
			&\quad\times\{\chi(\alpha_1)q^{e_1+1}(1-q^{-1})(1-q^{-2s-1})(1-q^{-(e_2-e_1)s+\frac{e_2-e_1}{2}}) \\
			&\qquad+\chi(-\alpha_3)\chi(\alpha_1\alpha_2)^{e_2+e_3+1}q^{-(e_3-e_1)s}(1-q^{-2s+1}) \\
			&\quad\qquad\times(q^{\frac{e_1+e_2}{2}}(1-q^{-1})-q^{-(e_1+e_2)s}(1-q^{-2s-2}))\}.
		\end{align*}
		\item $d=0$ and $e_1+e_2$ is odd
		\begin{align*}
			\mathrm{Wh}_{3, 0}(B, s)^\chi
			&=\alpha_\psi(\pi)^3\chi(-1)^{e_1}q^{-(e_1+3)s-\frac{5}{2}}(1-q^{-2s-1})^{-1}(1-q^{-2s})^{-1}(1-q^{-2s+1})^{-1} \\
			&\quad\times\{\chi(-\alpha_2)q^{-(e_2-e_1)s+\frac{e_1+e_2+1}{2}}(1-q^{-1})(1-q^{-2s-1})(1-q^{-2s+1}) \\
			&\qquad\qquad\times(1-\chi(-\alpha_1\alpha_2)^{e_2+e_3}q^{-s(e_3-e_2)}) \\
			&\quad\quad-\chi(\alpha_1)q^{-(e_2-e_1+1)s+\frac{e_1+e_2+1}{2}}(1-q^{-1})^2 \\
			&\qquad\qquad\times(q(1-q^{-2s-1})+\chi(-\alpha_1\alpha_2)^{e_2+e_3}q^{-s(e_3-e_2)}(1-q^{-2s+1})) \\
			&\quad\quad+\chi(\alpha_1)q^{e_1+1}(1-q^{-1})(1-q^{-2s})(1-q^{-2s-1}) \\
			&\quad\quad-\chi(\alpha_1)\chi(-\alpha_1\alpha_2)^{e_2+e_3}q^{-(e_2+e_3)s}(1-q^{-2s})(1-q^{-2s-2})(1-q^{-2s+1})\}.
		\end{align*}
		\item $d=1$ and $e_1+e_2$ is even
		\begin{align*}
			\mathrm{Wh}_{3,1}(B, s)^\chi
			&=q^{-2s-1}(1-q^{-2s-1})^{-1}(1-q^{-2s+1})^{-1} \\
			&\quad\times\{\chi(-1)q(1-q^{-3})(1-q^{-2s-1}) \\
			&\quad\quad-\chi(-1)q^{-(e_1+e_2)s+\frac{e_1+e_2}{2}+1}(1-q^{-2s-1})(1-q^{-2s-2}) \\
			&\quad\quad+\chi(\alpha_2\alpha_3)\chi(\alpha_1\alpha_2)^{e_2+e_3}q^{-(e_1+e_3)s+\frac{e_1+e_2}{2}} \\
			&\qquad\qquad\times(1-q^{-2s+1})(1-q^{-2s-2})(1-q^{-s(e_2-e_1)-\frac{e_2-e_1}{2}})\}.
		\end{align*}
		\item $d=1$ and $e_1+e_2$ is odd
		\begin{align*}
			\mathrm{Wh}_{3,1}(B, s)^\chi
			&=\chi(-1)q^{-2s-1}(1-q^{-2s-1})^{-1}(1-q^{-2s})^{-1}(1-q^{-2s+1})^{-1} \\
			&\quad\times\{q(1-q^{-3})(1-q^{-2s})(1-q^{-2s-1}) \\
			&\quad\quad-q^{-(e_1+e_2+1)s+\frac{e_1+e_2+3}{2}}(1-q^{-2})(1-q^{-2s})(1-q^{-2s-2}) \\
			&\quad\quad-\chi(-\alpha_1\alpha_2)^{e_2+e_3}q^{-(e_2+e_3)s+e_1}(1-q^{-2s})(1-q^{-2s+1})(1-q^{-2s-2}) \\
			&\quad\quad+(1-\chi(-\alpha_1\alpha_2)^{e_2+e_3}q^{-s(e_3-e_2)})q^{-(e_1+e_2+1)s+\frac{e_1+e_2+1}{2}} \\
			&\quad\qquad\times(1-q^{-1})(1-q^{-2s+1})(1-q^{-2s-2}) \\
			&\quad\quad+(1-\chi(-\alpha_1\alpha_2)^{e_2+e_3}q^{-s(e_3-e_2)})\chi(-\alpha_1\alpha_2)q^{-(e_1+e_2)s+\frac{e_1+e_2+1}{2}} \\
			&\quad\qquad\times(1-q^{-2s-2})(1-q^{-2s-1})(1-q^{-2s+1})\}.
		\end{align*}
		\item $d=2$ and $e_1+e_2$ is even
		\begin{align*}
			\mathrm{Wh}_{3,2}(B, s)^\chi
			&=\alpha_\psi(\pi)\chi(-1)^{e_1+1}q^{-(e_1+1)s+e_1-\frac{1}{2}}(1-q^{-2s+1})^{-1} \\
			&\quad\times\{\chi(-\alpha_1)q(1-q^{-2s-1})(1-q^{-s(e_2-e_1)+\frac{e_2-e_1}{2}}) \\
			&\quad\quad+\chi(\alpha_1\alpha_2)^{e_2+e_3+1}\chi(\alpha_3)q^{-(e_3-e_1)s+\frac{e_2-e_1}{2}}(1-q^{-2s+1})\}.
		\end{align*}
		\item $d=2$ and $e_1+e_2$ is odd
		\begin{align*}
			\mathrm{Wh}_{3,2}(B, s)^\chi
			&=\alpha_\psi(\pi)\chi(-1)^{e_1}q^{-(e_1+1)s+e_1}(1-q^{-2s})^{-1}(1-q^{-2s+1})^{-1} \\
			&\quad\times\{\chi(-\alpha_2)q^{-s(e_2-e_1)+\frac{e_2-e_1}{2}}(1-q^{-2s-1})(1-q^{-2s+1}) \\
			&\quad\qquad\times(1-\chi(-\alpha_1\alpha_2)^{e_2+e_3}q^{-s(e_3-e_2)}) \\
			&\quad\quad+\chi(\alpha_1)q^{\frac{1}{2}}(1-q^{-2s-1})((1-q^{-2s})-q^{-(e_2-e_1+1)s+\frac{e_2-e_1+1}{2}}(1-q^{-1})) \\
			&\quad\quad-\chi(\alpha_1)\chi(-\alpha_1\alpha_2)^{e_2+e_3}q^{-(e_3-e_1+1)s+\frac{e_2-e_1}{2}}(1-q^{-1})(1-q^{-2s+1})\}.
		\end{align*}
	\end{itemize}
\end{theorem}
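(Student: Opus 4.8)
The plan is to derive each of the six displayed formulas by specializing the explicit expression for $S_t(B,s)^\chi$ established in the preceding theorem to the case $n=3$, and then passing to the Whittaker normalization via the substitution $s\mapsto s+2$ recorded in the statement. The first step is to enumerate the data over which the outer sums range. The involutions $\sigma\in\mathfrak{S}_3$ are the identity, with $c_2(\sigma)=0$, and the three transpositions $(12),(13),(23)$, each with $c_2(\sigma)=1$. For $\sigma=\mathrm{id}$ every ordered set partition $I=I_0\cup\cdots\cup I_r$ of $\{1,2,3\}$ is admissible: the single block, the six two-block splits, and the six orderings into singletons. For a transposition such as $(12)$ the $\sigma$-orbits $\{1,2\}$ and $\{3\}$ must each lie in one block, so only $\{1,2,3\}$, $\{1,2\}\cup\{3\}$ and $\{3\}\cup\{1,2\}$ occur. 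For each pair $(\sigma,\{I_i\})$ I would read off the invariants $\tau(\{I_i\})$, $t(\sigma,\{I_i\})$, $n_l$, $n^{(l)}$, $n(l)$, $c_1^{(l)}(\sigma)$ and $b_l(\sigma,B)$ directly from their definitions.

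Next I would organize the computation by the value of $d=t$. The cut index $k$ separates the blocks with $\lambda_l<0$ (those indexed $0\le l\le k-1$) from those with $\lambda_l\ge0$, and the constraint $n^{(k)}=t$ selects which partitions contribute to a given $d$. The case $d=3$ forces $k=0$ and is subsumed by the relation $S_3(B,s)=1$ noted above, so nothing remains to prove there. For $d=0,1,2$ and each admissible $(\sigma,\{I_i\})$ I would carry out the inner sum over $\{\nu\}_k^t$: since $\nu_1,\dots,\nu_{k-1}\ge1$ and the partial sums $\lambda_l=\nu_0+\cdots+\nu_l$ run over $[-b_l(\sigma,B),-1]$, each sum is a finite geometric sum in a power of $q$ that is linear in $s$, evaluated exactly as in the $n=1$ and $n=2$ cases. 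The $q$-exponents are assembled from $\tilde{\rho}_{l,\lambda_l}(\sigma;B)$ together with the linear terms $\nu_l\big((sn^{(l)}-n(l))-(sn^{(k)}-n(k))\big)$, while the dependence on $\alpha_1,\alpha_2,\alpha_3$ enters through the factors $\xi_{i,\lambda_l}(B)_\chi$ and the sign $\chi(-1)^{\nu_l(n^{(l)}-n^{(k)})}$; for the transpositions the contribution of the fixed point and the factor $\tfrac{(1-q^{-1})^{\sum c_1^{(l)}}q^{n(k)}}{\prod(q^{n(l)}-1)}$ are handled in the same routine way.

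The step I expect to be the main obstacle is the parity analysis that produces the division into ``$e_1+e_2$ even'' and ``$e_1+e_2$ odd.'' The factor $\xi_{i,\lambda}(B)_\chi$ is governed by the set $B_i(\lambda)$, whose membership turns on the congruence $e_k\not\equiv\lambda\pmod2$, and by the sign of $e_i+\lambda$; as $\lambda=\nu_0+\cdots+\nu_l$ changes parity along its range, both the cardinality $\#B_i(\lambda)$ and the vanishing versus non-vanishing alternatives in the definition of $\xi$ oscillate. In particular, in one parity branch the clause $e_i+\lambda\ge0$ with $\#B_i(\lambda)$ even makes a term vanish, truncating a geometric sum, whereas in the other branch a boundary term with $e_i+\lambda=-1$ survives; this is exactly what forces two distinct branches and is responsible for the different denominator patterns, namely $(1-q^{-2s-1})(1-q^{-2s+1})$ in the even case against the extra factor $(1-q^{-2s})$ appearing in the odd case. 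Tracking these oscillations carefully, and determining for each residue of $e_1+e_2$ which boundary contributions remain, is the genuinely delicate part.

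Once every contribution is evaluated, the final step is purely algebraic: I would collect the contributions of all $(\sigma,\{I_i\})$, factor out the common powers of $\alpha_\psi(\pi)$, $q$ and $\chi(-1)$, and combine the resulting rational functions of $q^{-s}$ over a common denominator to match the stated closed forms, finally applying $s\mapsto s+2$ to pass from $S_{3,d}(B,s)^\chi$ to $\mathrm{Wh}_{3,d}(B,s)^\chi$. This bookkeeping is lengthy but mechanical, so apart from the parity case division described above I do not anticipate any conceptual difficulty.
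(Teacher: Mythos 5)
Your proposal is correct and follows essentially the same route as the paper: the paper's own justification for the $n=3$ theorem is simply that one specializes the general formula for $S_t(B,s)^\chi$ and computes ``in the same way where $n$ is 1 and 2,'' i.e.\ enumerate the involutions $\sigma\in\mathfrak{S}_3$ and the $\sigma$-stable ordered partitions (your count of $13+3\times3=22$ pairs is the right enumeration), evaluate the finite geometric sums over $\{\nu\}$, and track the parity of $e_1+e_2$ through the $\xi_{i,\lambda}(B)_\chi$ factors. Your identification of the parity analysis as the source of the even/odd case split and of the extra $(1-q^{-2s})$ factor in the odd case matches exactly what the paper's computation (left implicit there) requires.
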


\section*{Statements and Declarations}
\paragraph{Funding}
The authors did not receive support from any organization for the submitted work.
\paragraph{Competing Interests}
The authors have no competing interests to declare.
\paragraph{Data Availability}
Data sharing not applicable to this article as no datasets were generated or analyzed.


\begin{thebibliography}{99}
	\bibitem{padicbook} W. Casselman: Introduction to the theory of admissible representations of $p$-adic reductive groups, preprint (1995).
	\bibitem{Casselman} W. Casselman: The unramified principal series of $p$-adic groups.
	I. The spherical function,
	Compositio Mathematica, tome 40, no 3 (1980), 387-406.
	\bibitem{CS} W. Casselman, J. Shalika: The unramified principal series of $p$-adic groups.
	II. The Whittaker function,
	Compositio Mathematica, tome 41, no 2 (1980), 207-231.
	\bibitem{Cassels} J. W. S. Cassels: Rational quadratic forms, London Mathematical Society Monographs No. 13,
	Academic Press, London-New York-San Francisco, 1978.
	\bibitem{GJ} R. Godement, H. Jacquet: Zeta Functions of Simple Algebras,
	Lecture Notes in Mathematics, Vol. 260, Springer-Verlag, Berlin-New York (1972).
	\bibitem{Gunj} K. Gunji: On the Siegel Eisenstein series of degree two for low weights, J. Math. Soc. Japan 67 (2015), no. 3, 1043-1067. https://doi.org/10.2969/jmsj/06731043
	\bibitem{Gunji} K. Gunji: On the computation of ramified Siegel series of degree 3,
	RIMS Kokyuroku, No. 2100 (2019), 165-178.
	\bibitem{Gunji2} K. Gunji: On the Fourier coefficients of the Siegel Eisenstein series of odd level and the genus theta series, Journal of Number Theory 240 (2022), 124-144. https://doi.org/10.1016/j.jnt.2022.01.004
	\bibitem{SatoHironaka} Y. Hironaka, F. Sato: Local densities of representations of quadratic forms over
	$p$-adic integers (the non-dyadic case), J. Number Theory 83 (2000), 106-136. https://doi.org/10.1006/jnth.1999.2505
	\bibitem{FEIkeda} T. Ikeda: On the functional equation of the Siegel series,
	Journal of Number Theory 172 (2017), 44-62. https://doi.org/10.1016/j.jnt.2016.08.002
	\bibitem{Location} T. Ikeda: On the location of the poles of the triple $L$-functions, 
	Composito Mathematica 83.2 (1992), 187-237.
	\bibitem{Katsurada} H. Katsurada: An explicit formula for Siegel series, Amer. J. Math.121 (1999), 415-452.
	\bibitem{Mizuno} Y. Mizuno: An explicit arithmetic formula for the Fourier coefficients of Siegel-Eisenstein series of degree two and square-free odd levels, Math. Z. 263 (2009), no. 4, 837-860. https://doi.org/10.1007/s00209-008-0442-2
	\bibitem{Moeglin} C. M\oe glin et J.L. Waldspurger: Mod\`{e}les de Whittaker d\'{e}g\'{e}n\'{e}r\'{e}s
	pour des groupes $p$-adiques, Math. Z. 196 (1987), 427-452.
	\bibitem{Shimura} G. Shimura: Euler products and Eisenstein series, CBMS Regional Conference
	Series in Mathematics, 93 (1997), AMS.
	\bibitem{Sweet} W. Jay Sweet Jr: A Computation of the Gamma Matrix of a Family of $p$-adic Zeta Integrals, Journal of Number Theory, 55 (1995), 222-260.
	\bibitem{degree2} S. Takemori: $p$-adic Siegel-Eisenstein series of degree two,
	Journal of Number Theory 132 (2012), 1203-1264. https://doi.org/10.1016/j.jnt.2012.01.001
	\bibitem{Takemori} S. Takemori: Siegel Eisenstein series of degree $n$ and $\Lambda$-adic
	Eisenstein series, Journal of Number Theory 149 (2015), 105-138. https://doi.org/10.1016/j.jnt.2014.10.005
	\bibitem{NTG} J. Tate: Number Theoretic Background,
	Proceedings of Symposia in Pure Mathematics Vol. 33 (1979), part 2, 3-26.
	\bibitem{O'Meara} O. Timothy O'Meara: Introduction to Quadratic Forms, Springer-Verlag, New York, 1963.
	\bibitem{BNT} A. Weil: Basic Number Theory, Springer-Verlag, New York, 1967.
\end{thebibliography}
\end{document}